\font\mfett=cmmib10 at11pt
\def\gamra{\hbox{\mfett\char013}}
\def\lamdra{\hbox{\mfett\char021}}
\def\balpha{\hbox{\mfett\char011}}
\def\blambda{\hbox{\mfett\char021}}
\newcommand\mycom[2]{\genfrac{}{}{0pt}{}{#1}{#2}}
\newcounter{thm}
\numberwithin{thm}{section}
\numberwithin{equation}{section}
	\newtheoremstyle{myplain}		
			{}			
			{}			
			{\itshape}				
			{}				
			{\sffamily\bfseries}				
			{.}		
			{ }				
			{\thmname{#1}\thmnumber{ #2}\textnormal{\textsf{\thmnote{ (#3)}}}}			
    \newtheoremstyle{mybreak}
            {}{}{}{}{\sffamily\bfseries}{.}{\newline}
            {\thmname{#1}\thmnumber{ #2}\textnormal{\textsf{\thmnote{ (#3)}}}}
	\newtheoremstyle{mydef}
			{}{}{}{}{\sffamily\bfseries}{.}{ }
			{\thmname{#1}\thmnumber{ #2}}
	\newtheoremstyle{myrem}
			{}{}{}{}{\sffamily\itshape}{.}{ }
			{\thmname{#1}\thmnumber{ #2}}
\theoremstyle{myplain}
	\newtheorem{theorem}[thm]{Theorem}
	\newtheorem{lemma}[thm]{Lemma}
    \newtheorem{corollary}[thm]{Corollary}
\theoremstyle{mybreak}
	\newtheorem{algorithm}[thm]{Algorithm}
\theoremstyle{mydef}
	\newtheorem{remark}[thm]{Remark}
\theoremstyle{mydef}
	\newtheorem{example}[thm]{Example}
	\newcommand{\cc}{\mathbb{C}}
		\newcommand{\zz}{\mathbb{Z}}
		\newcommand{\nn}{\mathbb{N}}
	\newcommand{\rr}{\mathbb{R}}
\newcommand{\argmax}{\mathop{\mathrm{argmax}}}
\def\sumprime_#1^#2{
    \setbox0=\hbox{$\scriptstyle{#1}$}
    \setbox1=\hbox{$\scriptstyle{#2}$}
    \setbox2=\hbox{$\displaystyle{\sum}$}
    \setbox4=\hbox{${}^\prime\mathsurround=0pt$}
    \dimen0=.5\wd0 \advance\dimen0 by-.5\wd2
    \ifdim\dimen0>0pt
        \ifdim\dimen0>\wd4 \kern\wd4
        \else\kern\dimen0
        \ifdim\dimen1>\wd4 \kern\wd4
        \else\kern\dimen1
    \fi\fi\fi
\mathop{{\sum}^\prime}_{\kern-\wd4 #1}^{\kern-\wd4 #2}
}
\title{\Large Exact Reconstruction of Extended Exponential Sums Using Rational Approximation of their Fourier Coefficients}
\author{Nadiia Derevianko\footnote{Institute for Numerical and Applied Mathematics, G\"ottingen University, Lotzestr.\ 16-18, 37083 G\"ottingen, Germany, \{n.derevianko,plonka\}@math.uni-goettingen.de} \footnote{Corresponding author} \quad Gerlind Plonka$^{*}$}
\date{}
\begin{document}
	\let\oldproofname=\proofname
	\renewcommand{\proofname}{\itshape\sffamily{\oldproofname}}

\maketitle

\begin{abstract}
In this paper, we derive a new recovery procedure for the reconstruction of extended exponential sums of the form 
$y(t) = \sum_{j=1}^{M} \left( \sum_{m=0}^{n_j}  \, \gamma_{j,m} \, t^{m} \right) {\mathrm e}^{2\pi \lambda_j t}$,  where the frequency parameters $\lambda_{j} \in {\mathbb C}$ are pairwise distinct.  For the reconstruction  we employ a finite set of  classical Fourier coefficients of $y$ with regard to a finite interval $[0,P] \subset {\mathbb R}$ with $P>0$. Our method requires 
at most $2N+2$ Fourier coefficients $c_{k}(y)$ to recover all parameters of $y$, where $N:=\sum_{j=1}^{M} (1+n_{j})$ denotes the order of $y(t)$.
The recovery is based on the observation that for $\lambda_{j} \not\in \frac{{\mathrm i}}{P} {\mathbb Z}$ the terms of $y(t)$ possess Fourier coefficients with rational structure. We employ a recently proposed stable iterative rational approximation algorithm in \cite{AAA}. 
If a sufficiently large set of $L$ Fourier coefficients of $y$ is available (i.e., $L > 2N+2$), then our recovery method automatically detects   
 the number $M$ of terms of $y$, the multiplicities $n_{j}$ for $j=1, \ldots , M$, as well as all parameters $\lambda_{j}$, $j=1, \ldots , M$ and $ \gamma_{j,m}$ $j=1, \ldots , M$, $m=0, \ldots , n_{j}$, determining $y(t)$. Therefore our method provides a new stable alternative to the known numerical approaches for the recovery of exponential sums that are based on Prony's method.\\
\textbf{Keywords:}  sparse exponential sums, extended exponential sums, rational approximation, AAA algorithm, barycentric representation,
Fourier coefficients.\\
\textbf{AMS classification:}
41A20, 42A16, 42C15, 65D15, 94A12.
\end{abstract}

\section{Introduction}
\label{introduction}

Recently, we have proposed a new reconstruction method to recover real functions of the form 
$$
y(t) =\sum \limits_{j=1}^{N} \gamma_j \cos(2\pi a_j t +b_j), \qquad  \gamma_j\in(0,\infty), \ (a_j,b_j)\in (0,\infty)\times [0,2\pi),
$$
from a limited number of classical Fourier coefficients of $y$ from its Fourier expansion on a given fixed interval $[0, P]$, see \cite{PP2020}.

This paper continues and strongly generalizes our research started in \cite{PP2020}. We present a new reconstruction method to recover 
complex extended exponential sums, i.e., sums being of polynomial exponential form,
$$ y(t)=\sum_{j=1}^M \left( \sum_{m=0}^{n_j}  \, \gamma_{j,m} \, t^{m} \right) {\mathrm e}^{\lambda_jt}, \quad \gamma_{j,m} \in \cc, \, \gamma_{j,n_{j}} \neq 0, \,  \lambda_j \in \cc.  
$$

First we introduce the main objects studied in the paper. 
For $N \in \nn$, we consider the set of \textit{proper exponential sums}
\begin{equation}\label{yn0}
\mathcal{Y}_N^{0}=\left\{  y: \, y(t)= \sum\limits_{j=1}^{n} \gamma_j {\mathrm e}^{\lambda_jt}, \ \  \gamma_j, \lambda_j \in \cc ,  \, n\leq N \right\}
\end{equation}
and the set of \textit{extended exponential sums }
\begin{equation}\label{yn}
\mathcal{Y}_N=\left\{  y: \, y(t)=\sum_{j=1}^M \left( \sum_{m=0}^{n_j}  \, \gamma_{j,m} \, t^{m} \right) {\mathrm e}^{\lambda_jt}, \ \ \gamma_{j,m}, \lambda_j \in \cc,  \, n:=\sum\limits_{j=1}^{M}(1+n_j ) \leq N\right\}.
\end{equation}
We call $n=n(y)$ the \textit{order} of the exponential sum $y(t)$ and $M=M(y)$ its \textit{length}. Obviously, $\mathcal{Y}_N^{0}$ is a subset of $\mathcal{Y}_N$ and we have $n(y)\geq M(y)$ in the general case and  $n(y)=M(y)$ for $y \in \mathcal{Y}_N^{0}$.
In fact, $\mathcal{Y}_N^{0}$ is a dense subset of $\mathcal{Y}_N$, and $\mathcal{Y}_N$ is closed with respect to the maximum norm in $C[a,b]$ for any compact interval $[a,b]$. Moreover,  $\mathcal{Y}_N$ is an existence set for the space $C[a,b]$ of continuous functions given on a compact interval $[a,b]$,  see,  \cite{Rice62} or \cite[Chapter VI]{Bbook}. 
Approximation with extended exponential sums has a long history. We refer to 
\cite{Kam76}, \cite{Wel81} that are dedicated to the question of approximation of classes of smooth functions $L_{p}[a,b]$, $1\leq p < \infty$, and $C[a,b]$ by extended exponential sums. Further, it is well-known for a long time that there is a close connection between approximation with exponential sums and rational approximation, see \cite{Bbook, Sidi82}.

The extended exponential sums $y(t)$ in $\mathcal{Y}_N$ of order $n \le N$ are solutions of homogeneous linear differential equations of order $n$ with constant coefficients $a_{0}, \ldots , a_{n-1} \in {\mathbb C}$ of the form
\begin{equation}\label{eq}
 y^{(n)}+a_{n-1} y^{(n-1)}+ \ldots +a_{1} y'+a_0 y=0.
\end{equation}
The corresponding characteristic polynomial is given as
$$
p(\lambda)= \lambda^{n}+a_{n-1} \lambda^{n-1}+ \ldots +a_{1} \lambda+a_0 = \prod_{j=1}^{M} (\lambda - \lambda_{j})^{n_{j}+1},
$$
i.e., $p(\lambda)$ has $M$ distinct roots  $\lambda_j$ with multiplicity $n_j+1$, $j=1, \ldots ,M$. 
In particular, the functions $t^{m} {\mathrm e}^{\lambda_jt}$, $m=0,\ldots ,n_j$, $j=1,\ldots ,M$, are linearly independent and form a basis of the space of the solutions of the differential equation (\ref{eq}). Similarly, it can be shown that $\mathcal{Y}_N$ is the solution space for homogeneous linear difference equations  of order $n \le N$, see e.g.\ \cite{Berg}, and therefore $\mathcal{Y}_N$ is closely related to the characterization of Hankel operators of finite rank, \cite{HR84, Peller}.

Extended exponential sums appear in many applications in system identification  and sparse approximation, see e.g. \cite{BDR06,PS10}.
For a comprehensive study of extended exponential sums from an algebraic point of view, we refer to \cite{Mo18}.

Our goal in this paper is to reconstruct the exponential sums in $\mathcal{Y}_N^{0}$ and $\mathcal{Y}_N$. 
This problem has been extensively studied  using Prony's method and its generalizations, see for example \cite{CL2020}, \cite{FMP2012}, \cite{PPT2011},  \cite{PT2010},   \cite{PSK2019},  \cite{PT2014}. However, the known studies mainly focussed  on the recovery of proper exponential sums. The extended model is less often treated, see \cite{Bat13,Bat17, PP2013, PT2013, Sidi82, Sidi85}.
Since Prony’s method involves computations with  Hankel or Toeplitz matrices with possibly high condition numbers, it requires a very careful numerical treatment. This is particularly true if the distance between two distinct frequency parameters $\lambda_{j_{1}}$ and $\lambda_{j_{2}}$ is very small. The extended exponential sums appear if such frequencies collide.

Our new method for reconstruction of signals in form of (extended) exponential sums is based on rational approximation and can be seen as a good alternative to the Prony reconstruction approach.
As input information for our new  algorithm we use a finite set of their Fourier coefficients. 
More precisely,  we consider the Fourier expansion of $y \in \mathcal{Y}_N$ on $[0,P]$ for some $P$ with  $0<P < \infty$ of the form 
$$ y(t) = \sum_{k \in {\mathbb Z}} c_{k}(y) \, {\mathrm e}^{2\pi {\mathrm i} kt/P} $$
with Fourier coefficients $c_{k}(y) := \frac{1}{P} \int_{0}^{P} y(t) \, {\mathrm e}^{-2 \pi {\mathrm i} kt/P} \, {\mathrm d} t$. We will derive algorithms to reconstruct $y$ from a sufficiently large set of given Fourier coefficients $c_{k}(y)$. 
Although $\mathcal{Y}_N^{0}$ is a subset of $\mathcal{Y}_N$ we will separate the reconstruction of  proper exponential sums as a special case due to the fact that in many applications only the set $\mathcal{Y}_N^{0}$ is considered. 

If we have $-{\mathrm i}\lambda_{j} \not\in \frac{1}{P} {\mathbb Z}$ for all frequencies $\lambda_{j}$, i.e., if $y(t)$ in (\ref{yn0}) or (\ref{yn})  does not possess any $P$-periodic terms, then we will employ the special property that the  classical Fourier coefficients $c_k(y)$ of  $y \in \mathcal{Y}_N^{0}$ and  $y \in \mathcal{Y}_N$  of full order $N$  can usually be represented by a rational function $r_{N}$ of type $(N-1,N)$, i.e., $c_k(y)=r_N(k)$ for $k \in {\mathbb Z}$. 
Our reconstruction approach is then based on the recovery of this rational function $r_{N}$  using a  modification of the  AAA algorithm that has been recently proposed in \cite{AAA}. Numerical stability of this algorithm is ensured by barycentric representation of the constructed approximant. The second important property of the AAA algorithm is that it works iteratively thereby  enlarging the polynomial degree of the numerator and denominator of the rational approximant step by step.
The  modified AAA algorithm will terminate if the needed order $N$ of the exponential sum is reached. This gives us the opportunity to reconstruct the order $N$ of the exponential sum, supposed that  a sufficiently large set of $L\ge 2N+1$ Fourier coefficients is given.

If  $y \in \mathcal{Y}_N$ is an extended exponential sum, then the corresponding rational function $r_N$ has $M$ multiple poles with multiplicities $n_j+1$ such that $\sum_{j=1}^{M}(1+n_j)=N$. 
Having reconstructed the rational function $r_{N}(t)$, we will show, how all wanted parameters that determine $y \in \mathcal{Y}_N$ can be uniquely computed from $r_{N}$. 
More exactly, beside $N$, $M$, and $n_{j}$, $j=1, \ldots , M$, we can determine all frequencies $\lambda_{j}$ and all polynomial coefficients $\gamma_{j,m}$ $j=1, \ldots , M$, $m=0, \ldots , n_{j}$, from $r_{N}$.

Further we will extend the algorithm for the case when $P$-periodic components also appear in  the expansions  (\ref{yn0}) and (\ref{yn}). In this case we first reconstruct the rational function that determines the non-$P$-periodic part of $y(t)$ using modified AAA-algorithm and recover all parameters of the non-$P$-periodic part of $y(t)$ as before. In a second step, the  $P$-periodic part of $y(t)$ is determined. 
Note that this is only possible if the given set of Fourier coefficients contains all $c_{k_j}(y)$ that are related to the occurring frequencies that have to be recovered, i.e., we need knowledge about $c_{k_j}(y)$
if  $\lambda_j= \frac{{\mathrm i} k_j}{ P}$ is a frequency parameter that occurs in $y(t)$. 
For proper exponential sums with periodic terms it can be simply shown that only a finite number of Fourier coefficients of $y(t)$ loses its special rational structure.
The recovery of $P$-periodic parts for (truly) extended exponential sums requires a special treatment, since in this case the polynomial coefficients corresponding to $P$-periodic terms still influence all Fourier coefficients of $y$.

\textbf{Outline.} In Section 2 we prove that the signals $y \in \mathcal{Y}_N$ (and consequently $y  \in \mathcal{Y}_N^{0}$) are uniquely determined by given parameters $M$, $n_j$, $\lambda_j$, $\gamma_{j,m}$ for $j, \ldots ,M$, $m=0, \ldots ,n_j$. 
In Section 3 we shortly describe the main ideas of the needed modified AAA-algorithm for rational approximation of functions.  
In Section 4 we consider the recovery of proper exponential sums. We separate two possible cases. In Section 4.1 we study the recovery of  proper exponential sums that contain only non-$P$-periodic terms,  and Section 4.2 is dedicated to the case when  $y \in \mathcal{Y}_N^{0}$ contains also $P$-periodic components.  In Section 5 we consider the recovery of extended exponential sums. In Section 5.1 we show that the Fourier coefficients of $y \in \mathcal{Y}_{N}$ can be represented by a rational function, and we show, how the wanted parameters can be reconstructed from this rational representation.
 Again we separate the consideration of the following two cases: In Section 5.2 we study the  reconstruction of extended exponential sums containing only non-$P$-periodic components.
In Section 5.3, we assume that extended exponential sums contain also $P$-periodic terms. The proposed reconstruction algorithms in Sections 4 and 5  are illustrated by numerical examples. 
The corresponding software can be found on our homepage http://na.math.uni-goettingen.de  We conclude the paper with some final remarks in Section 6.

\textbf{Notation.} As usual $\nn$, $\zz$, $\rr$ and $\cc$ are reserved for natural, integer, real and complex numbers and  let $\nn_0:=\nn \cup \{0\}$.

\section{Uniqueness of the Representation of Extended Exponential Sums}
\label{uniqueness}

In order to be able to reconstruct functions $y(t)$ from $\mathcal{Y}_N$ uniquely, we need to ensure that all parameters of 
\begin{equation}\label{y1} y(t)=\sum\limits_{j=1}^{M} p_{n_j}(t) \, {\mathrm e}^{\lambda_j t} = \sum_{j=1}^{M} \left( \sum_{m=0}^{n_{j}} \gamma_{j,m} \, t^{m} \right) \, {\mathrm e}^{\lambda_j t} 
\end{equation}
can be  uniquely determined.  For this purpose we assume the following restrictions on the parameters:
\begin{description}
\item{(1)} $ M \in \nn$, $M<\infty$;
\item{(2)} $ n_{j} \in  \nn_0$, $n_j<\infty$;
\item{(3)} $\gamma_{j,m} \in {\mathbb C}$ for $j=1, \ldots , M$ and $m=0, \ldots , n_{j}$, and $\gamma_{j,n_{j}} \neq 0$ for $j=1, \ldots , M$;
\item{(4)} $\lambda_{j} \in {\mathbb C}$ are pairwise distinct for $j=1, \ldots , M$.
\end{description}

Conditions (3) and (4) do not restrict the set $\mathcal{Y}_N$ with $N \ge \sum_{j=1}^{M}(1 + n_{j})$. 
If $\gamma_{j,n_{j}}=0$, then  the corresponding term in (\ref{y1})  can be removed, and if $\lambda_{j_{1}} = \lambda_{j_{2}}$ for two frequency parameters in (\ref{y1}), the corresponding terms can be combined into one term.
Note that the third condition implies that the polynomial $p_{n_j}(t)$ has exactly the degree $n_j$. 
In the special case $y \in  \mathcal{Y}_N^{0}$, the restriction (2) simplifies to $n_{j}=0$ for $j=1, \ldots , M$, and restriction (3) reads $\gamma_{j} = \gamma_{j,0} \neq 0$ for $j=1, \ldots , M$. We show now that with the restrictions above, all parameters of $y(t)$ are uniquely determined.

\begin{theorem}\label{uniq}
Let two extended exponential sums be of the form 
$$
 y_1(t)=\sum_{j=1}^{M_1} \left( \sum_{m=0}^{n'_j}  \, \gamma'_{j,m} \, t^{m} \right) {\mathrm e}^{\lambda'_jt} \; \text{and} \quad y_2(t)=\sum_{j=1}^{M_2} \left( \sum_{m=0}^{n''_j}  \, \gamma''_{j,m} \, t^{m} \right) {\mathrm e}^{\lambda''_jt},
$$
and assume that the parameters for $y_{1}(t)$ and $y_{2}(t)$ satisfy the restrictions $(1)- (4)$ given above.
If $y_1(t)=y_2(t)$ pointwise on a finite interval with positive length, then $M_1=M_2 =:M$  and (after suitable permutation of summands)  $n'_j=n''_j =:n_{j}$, $\lambda'_j =\lambda''_j$ for $j=1, \ldots , M$, and   $\gamma'_{j,m} =\gamma''_{j,m} $ for $j=1,\ldots ,M$, $m=0,\ldots ,n_j$.
\end{theorem}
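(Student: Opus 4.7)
The plan is to reduce the statement to a classical linear independence fact for products of polynomials and exponentials. First, because $y_1$ and $y_2$ are entire functions on $\cc$ (each being a finite sum of polynomial$\times$exponential terms), so is $y_1 - y_2$. Pointwise agreement on a finite interval of positive length provides a set with an accumulation point, so the identity theorem upgrades the hypothesis to $y_1(t)=y_2(t)$ for all $t \in \cc$.

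Next, I would merge the exponents: let $\{\mu_1, \ldots , \mu_K\} := \{\lambda'_1, \ldots , \lambda'_{M_1}\} \cup \{\lambda''_1, \ldots , \lambda''_{M_2}\}$ be the (pairwise distinct) frequencies appearing in either sum, and for each $k$ let $q_k(t) := p'_k(t) - p''_k(t)$ be the difference of the corresponding polynomial coefficients (taking $p'_k \equiv 0$ or $p''_k \equiv 0$ if $\mu_k$ is missing from the respective sum). Then
$$
 0 \;=\; y_1(t) - y_2(t) \;=\; \sum_{k=1}^{K} q_k(t)\, {\mathrm e}^{\mu_k t} \qquad (t \in \cc).
$$

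The core step, already implicitly invoked after (\ref{eq}), is the linear independence of the family $\{t^{m} {\mathrm e}^{\mu t} : m \in \nn_0\}$ indexed by pairwise distinct $\mu \in \cc$. I would prove this by induction on $K$: the case $K=1$ is immediate since ${\mathrm e}^{\mu_1 t}$ has no zeros. For the inductive step, divide by ${\mathrm e}^{\mu_1 t}$ and differentiate the resulting identity $(\deg q_1 + 1)$ times; the $q_1$ term is annihilated, while for $k \ge 2$ each summand $q_k(t){\mathrm e}^{(\mu_k - \mu_1)t}$ is turned into $\tilde q_k(t) {\mathrm e}^{(\mu_k - \mu_1)t}$ with $\deg \tilde q_k = \deg q_k$, because $\mu_k - \mu_1 \neq 0$. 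The induction hypothesis forces every $\tilde q_k \equiv 0$, hence $q_k \equiv 0$ for $k\ge 2$, and back-substitution gives $q_1 \equiv 0$ as well.

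Applying this lemma yields $q_k \equiv 0$ for every $k$, i.e.\ $p'_k(t) = p''_k(t)$ for every frequency $\mu_k$. By restriction (3), the coefficient of the highest power in each nonzero polynomial coefficient of $y_1$ and $y_2$ is nonzero, which prevents any frequency from appearing in one sum but not the other and pins down the degrees $n_j$. After an appropriate permutation of the summands this forces $M_1 = M_2$, $\lambda'_j = \lambda''_j$, $n'_j = n''_j$, and $\gamma'_{j,m} = \gamma''_{j,m}$ for all admissible indices. I do not expect a genuine obstacle; the only delicate point is the linear independence lemma, which is classical and handled cleanly by the differentiation argument above.
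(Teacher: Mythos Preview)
Your proposal is correct and follows essentially the same route as the paper: form the difference $y_1 - y_2$, collect terms by distinct frequency, invoke the linear independence of the functions $t^m {\mathrm e}^{\mu t}$ to force every polynomial coefficient to vanish, and then use restriction~(3) to match up the two representations. The only difference is in how that independence is justified---the paper cites it as the basis property of solutions to the constant-coefficient ODE~(\ref{eq}), whereas you pass to all of $\cc$ via the identity theorem and give a self-contained induction/differentiation argument; both are standard and equally valid.
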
 

\begin{proof}
We consider the function 
\begin{equation} \label{rep-y1}
y(t)=y_1(t)-y_2(t)=\sum\limits_{j=1}^{M_1+M_2} \left( \sum_{m=0}^{n_j}  \, \gamma_{j,m} \, t^{m} \right) {\mathrm e}^{\lambda_jt} 
\end{equation}
with
\begin{align*}
\lambda_j & :=\lambda'_j, \quad\quad  n_j :=n'_j, \quad\quad     \gamma_{j,m}  :=\gamma'_{j,m}  \quad   \text{for} \; m=0,\ldots ,n_j \quad \quad \text{and} \;  j=1,\ldots ,M_1 ; \\
\lambda_{M_1+j} & :=\lambda''_j, \,  n_{M_1+j} :=n''_j, \;   \gamma_{M_1+j,m}  :=-\gamma''_{j,m}  \,   \text{for}  \; m=0,\ldots ,n_{M_1+j} \;  \text{and} \;  j=1,\ldots,M_2.
\end{align*}
Let $y_1(t)=y_2(t)$ for $t\in I$, where $I\subset \rr$ is some interval with finite positive length. Then $y\equiv 0$ on $I$.

Let $M$ denote the number of pairwise distinct frequency parameters $\lambda_j$ in the representation (\ref{rep-y1}) of the function $y(t)$.  According to the restriction (4) on the frequency parameters  we have $M\geq \max\{M_1, \, M_2\}$. Then we can rewrite 
\begin{equation} \label{repr-y}
y(t)=\sum\limits_{\ell=1}^{M} \left( \sum_{m=0}^{n_\ell}  \, \tilde{\gamma}_{\ell,m} \, t^{m} \right) {\mathrm e}^{\tilde{\lambda}_\ell t}, 
\end{equation}
where $\tilde{\lambda}_\ell \in \Lambda:=\{  \lambda_j: \, j=1,\ldots,M_1+M_2 \}$ are pairwise distinct. The functions $t^{m} {\mathrm e}^{\tilde{\lambda}_\ell t} $, $m=0,\ldots,n_\ell$, $\ell =1,\ldots,M$,  are linearly independent as the solutions of the equation (\ref{eq}) with $n=\sum_{\ell=1}^{M}(1+n_\ell )$. Therefore, the  condition 
\begin{equation}\label{plus1}
y(t) = \sum_{\ell=1}^{M} \left( \sum_{m=0}^{n_\ell}  \, \tilde{\gamma}_{\ell,m} \, t^{m} \right) {\mathrm e}^{\tilde{\lambda}_\ell t}=0, \qquad  t \in I \subset \rr,
\end{equation}
yields $\tilde{\gamma}_{\ell,m}=0$ for all $\ell=1,\ldots,M$ and $m=0,\ldots,n_\ell$.

Each parameter $\tilde{\lambda}_\ell$ can occur once or twice in the set $\Lambda$. If it occurs once, for example $\tilde{\lambda}_\ell=\lambda_j'$, 
then (\ref{plus1}) implies that the corresponding polynomial coefficient in  $p_{n_\ell}(t) = \sum_{m=0}^{n_{\ell}} \tilde{\gamma}_{\ell,m} t^{m}$ completely vanishes, i.e.,  $\tilde{\gamma}_{\ell,m}=0$ for $m=0, \ldots, n_{\ell}$. But  
this contradicts the assumption (3).
 Therefore, each $\tilde{\lambda}_\ell$ occurs twice in the set $\Lambda$ and we can conclude that  $M_1=M_2 =M$. 
 Further, for each $\ell=1, \ldots , M$,  we find $j_{1}, \, j_{2} \in \{1, \ldots , M\}$  such that 
$\tilde{\lambda}_\ell=\lambda_{j_1}'=\lambda_{j_2}''$. Then the  polynomial coefficient corresponding to ${\mathrm e}^{\tilde{\lambda}_\ell t}$  in (\ref{repr-y}) satisfies
$$
p_{n_\ell}(t) =\sum_{m=0}^{n_{\ell}}  \, \tilde{\gamma}_{\ell,m} \, t^{m} = \sum_{m=0}^{n_{j_1}'}  \, \gamma_{j_1,m}' \, t^{m}-\sum_{m=0}^{n_{j_2}''}  \, \gamma_{j_2,m}'' \, t^{m} =0.
$$
Since by restriction (3), $\gamma_{j_1,n_{j_{1}}'}' \neq 0$ and $\gamma_{j_2,n_{j_{2}}''}''\neq 0$, we conclude that $n_{j_1}'=n_{j_2}''=:n_{j}$ and  $\gamma_{j_1,m}'=\gamma_{j_2,m}''$ for $m=0, \ldots, n_{j}$. 
\end{proof}

\begin{remark}\label{uniq1} 
If  two proper  exponential sums
$$
 y_1(t)=\sum_{j=1}^{M_1}  \gamma'_{j} {\mathrm e}^{\lambda'_jt} \qquad  \text{and} \qquad   y_2(t)=\sum_{j=1}^{M_2}  \gamma''_{j}  {\mathrm e}^{\lambda''_jt},
$$
satisfy the restrictions (1) - (4) and are identical on an interval of finite positive length, then Theorem \ref{uniq} implies that 
 $M_1=M_2=M$  and (after suitable permutation of summands)  $\lambda'_j =\lambda''_j$ and   $\gamma'_{j} =\gamma''_{j} $ for $j=1,\ldots,M$.
\end{remark}


\section{The modified AAA Algorithm for Rational Approximation} 
\label{aaa-algorithm}

Our reconstruction algorithms for exponential sums in Sections \ref{proper-exp} and \ref{extended-exp} are essentially based on the following slight modification of the AAA algorithm in \cite{AAA}.
For the convenience of the reader  we shortly summarize this algorithm with the needed slight modification for rational functions of type 
 $(N-1,N)$. For a similar modification for special point sets we refer to \cite{PP2020}. 
   
For a given sufficiently large finite set of pairwise distinct points $\Gamma \subset {\mathbb R}$ and a corresponding set of values $\{ f(z) : \, z \in \Gamma \}$, the (modified) AAA algorithm iteratively computes a rational function $r_N$ of type $(N-1,N)$ such that $r_N(z) =  f(z)$ for $z \in S$, where $S \subset \Gamma$ is a subset of $N+1$ given points, and such that the error $|r_{N}(z) -  f(z)| $ is small for the remaining points $z \in \Gamma \setminus S$.

At the iteration step $J \ge 1$, we proceed as follows.
Assume that we have  the  set $S_{J+1}=\{z_1, \ldots , z_{J+1}\} \subset \Gamma$, where we want to interpolate $f(z_{j})$, and let $\Gamma_{J+1} := \Gamma \setminus  S_{J+1}$ be the point set where we will approximate. We introduce the corresponding vectors
$$\mathbf f_{S_{J+1}}:= \left( f(z_{j}) \right)_{j=1}^{J+1} \in {\mathbb C}^{J+1}, \qquad  
\mathbf f_{\Gamma_{J+1}} := \left( f(z) \right)_{z \in \Gamma_{J+1}} \in {\mathbb C}^{L-1-J}.$$ 
The rational function $r_{J}(z)$ is constructed in  barycentric form $r_{J}(z):  = \tilde{p}_{J}(z)/\tilde{q}_{J}(z)$ with 
\begin{equation}\label{bar-form}
\tilde{p}_{J}(z) := \sum_{j=1}^{J+1} \frac{w_j \, f(z_{j})}{z-z_j}, \qquad  \tilde{q}_{J}(z) := \sum_{j=1}^{J+1} \frac{w_j}{z-z_j}, 
\end{equation}
where $w_j \in {\mathbb C}$, $j=1, \ldots , J+1$, are weights. This representation already implies that  the interpolation conditions $r_{J}(z_j) = f(z_{j})$ are satisfied for $w_{j} \neq 0$, $j \in \{1, \ldots, J+1\}$.
The vector of weights ${\mathbf w} := (w_1, \ldots , w_{J+1})^T$ is now chosen such that $r(z)$ approximates the remaining data and additionally satisfies 
the side conditions  
\begin{equation}\label{condw}
\| {\mathbf w} \|_2^2 = \sum_{j=1}^{J+1} w_j^2 = 1 \quad \text{and} \quad {\mathbf w}^T  \,  \mathbf f_{S_{J+1}}= \sum_{j=1}^{J+1} w_j f(z_{j}) = 0.
\end{equation}
The first condition is a normalization condition. The second condition in (\ref{condw}) ensures that $r_{J}(z)$ is of a type $(J-1,J)$. 
To compute ${\mathbf w}$, we consider the restricted least-squares problem  
\begin{equation}\label{mini}
\min_{{\mathbf w}} \sum_{z \in \Gamma_{J+1}} \left|f(z) \, \tilde{q}_{J}(z)-\tilde{p}_{J}(z)\right|^2, \quad \textrm{s.t.} \quad  \| {\mathbf w} \|_2^2 = 1,  \;   {\mathbf w}^T  \,  \mathbf f_{S_{J+1}}=0.
\end{equation}
At the first iteration step $J=1$, ${\mathbf w} \in {\mathbb C}^{2}$ is already completely fixed by the two side conditions. For $J>1$, we define the matrices
$$ 
{\mathbf A}_{J+1}:= \left( \frac{f(z) - f(z_{j})}{z-z_j} \right)_{z \in \Gamma_{J+1}, z_j \in S_{J+1}},\quad 
{\mathbf C}_{J+1}:= \left( \frac{1}{z-z_j} \right)_{z \in \Gamma_{J+1}, z_j \in S_{J+1}},$$
and rewrite the term in (\ref{mini}) as
$$
\sum_{z \in \Gamma_{J+1}} \left|f(z) \, \tilde{q}_{J}(z)-\tilde{p}_{J}(z)\right|^2 = \sum_{z \in \Gamma_{J+1}} \left|       {\mathbf w}^T \, \left( 
\frac{f(z)- f(z_{j})}{z-z_j} \right)_{j=1}^{J+1} \right|^2 
= \| {\mathbf A}_{J+1} {\mathbf w} \|_2^2. 
$$
Now,  the minimization problem in (\ref{mini}) takes the form
\begin{equation}\label{mini1}
\min_{{\mathbf w}} \| {\mathbf A}_{J+1} {\mathbf w} \|_2^2 \quad \textrm{s.t.} \quad   \| {\mathbf w} \|_2^2 = 1,  \;  {\mathbf w}^T  \,  \mathbf f_{S_{J+1}}=0.
\end{equation}

To find the solution vector ${\mathbf w} = {\mathbf w}_{J+1}$  of  (\ref{mini1}) approximately, we compute the right (normalized) singular vectors ${\mathbf v}_1$ and  ${\mathbf v}_2$ of the matrix ${\mathbf A}_{J+1}$ corresponding to the two smallest singular values $\sigma_1 \leq \sigma_2$ of ${\mathbf A}_{J+1}$ and take 
$$
{\mathbf w}_{J+1}= \frac{1}{\sqrt{({\mathbf v}_1^{T} \mathbf f_{S_{J}})^{2}+({\mathbf v}_2^{T} \mathbf f_{S_{J}})^{2}}} \left( ({\mathbf v}_2^{T} \mathbf f_{S_{J}}){\mathbf v}_1 - ({\mathbf v}_1^{T} \mathbf f_{S_{J}}){\mathbf v}_2 \right),
$$
such that  $\| {\mathbf w}_{J+1} \|_2^2 = 1$ and ${\mathbf w}_{J+1}^T  \,  \mathbf f_{S_{J+1}}=0$. 
Having determined the weight vector ${\mathbf w}_{J+1}$, the rational function $r_{J}$ is completely fixed by (\ref{bar-form}).
Finally we  consider the errors $|r_{J}(z) - f(z)|$ for all $z \in \Gamma_{J+1}$, where we do not interpolate.
The algorithm terminates if $\max_{z \in \Gamma_{J+1}} | r_{J}(z) - f(z)| < \epsilon$ for a predetermined bound $\epsilon$ or if $J$ reaches a predetermined maximal degree. Otherwise, we find the next point for interpolation as
$$ z_{J+2} := \argmax_{z \in \Gamma_{J+1}} | r_{J}(z) - f(z)|. $$

\begin{algorithm}[Modified AAA algorithm] 
\label{alg1} 
\textbf{Input: } \\
$\mathbf {\Gamma} \in \cc^{L}$ set of given support points $z_j$, $j=1,\ldots,L$, with $L$ large enough \\
$\mathbf{f} \in \cc^{L}$ vector of given function values $f(z_{j})$ corresponding to $\Gamma$\\
\textit{tol>0} tolerance for the approximation error \\ 
\textit{jmax} $\in \nn$ with $\textit{jmax} < \lfloor \frac{L-1}{2} \rfloor$ maximal order of polynomials in the rational function

\noindent
\textbf{Main Loop:}

\noindent
for $j=1:$ \textit{jmax}
\begin{itemize}
\item If $j=1$, choose ${\mathbf S}:=(z_{1}, z_{2})^{T}$, $\mathbf{f}_{\mathbf{S}} := (f(z_{1}), f(z_{2}))^{T}$, where $f(z_{1})$ and $f(z_{2})$ from $\mathbf{f}$ have largest absolute values; update $\mathbf{\Gamma}$ and $\mathbf{f}$ by deleting $z_{1}, z_{2}$ in $\mathbf{\Gamma}$ and $f(z_{1}), f(z_{2})$ in $\mathbf{f}$.\\
 If $ j>1$, compute $z_{k} := \argmax_{z \in \mathbf{\Gamma}} | r(z) - f(z)|$; update $\mathbf{S}$, $\mathbf{f}_{\mathbf{S}}$, $\mathbf{\Gamma}$ and $\mathbf{f}$ by adding $z_{k}$ to $\mathbf{S}$ and deleting  $z_{k}$ in $\mathbf{\Gamma}$, adding $f(z_{k})$ to $\mathbf{f}_{\mathbf{S}}$ and deleting it in $\mathbf{f}$.
\item Build the $(L-j-1) \times (j+1)$ matrices $\mathbf{C}_{j+1}\!\!:=\!\!\left( \frac{1}{z-k} \right)_{z \in \mathbf{\Gamma}, k \in \mathbf{S}}$, ${\mathbf A}_{j+1}\!\!:=\!\! \left( \frac{f(z) - f(k)}{z-k} \right)_{z \in \mathbf{\Gamma}, k \in \mathbf{S}}$.
\item Compute the singular vectors $
\mathbf{v}_1$ and $
\mathbf{v}_2$ corresponding to two smallest singular values of $\mathbf{A}_{j+1}$;
 compute $\mathbf{w}:=(\mathbf{v}_2^{T} \, \mathbf{f}_{\mathbf{S}})\mathbf{v}_1 - (\mathbf{v}_1^{T} \, \mathbf{f}_{\mathbf{S}})\mathbf{v}_2$ and normalize $\mathbf{w} :=\frac{1}{\|\mathbf{w}\|_2} \, \mathbf{w}$.
\item Compute $\mathbf{p}:=\mathbf{C}_{j+1} (\mathbf{w}.\ast \mathbf{f}_{\mathbf{S}})$, $\mathbf{q}:=\mathbf{C}_{j+1} \mathbf{w}$ and $\mathbf{r}:=(r(z))_{z\in \mathbf{\Gamma}}=\mathbf{p}./\mathbf{q} \in \cc^{L-1-j}$.
\item If $\| \mathbf{r}-\mathbf{f} \|_\infty<$ \textit{tol}  then stop.
\end{itemize}
end (for)\\
\textbf{Output: } \\ 
$N=j$ the order of the rational function $r_{N}$ \\
 $\mathbf{S} = (z_j)_{j=1}^{N+1} \in \cc^{N+1}$ is the vector of points with the interpolation property \\
  $\mathbf{f}_{\mathbf{S}}= (f(z_j))_{j=1}^{N+1} \in \cc^{N+1}$ is the vector of the corresponding interpolation function values\\
   $\mathbf{w}=(w_j)_{j=1}^{N+1} \in \cc^{N+1}$ is the weight vector. 

\end{algorithm}

Algorithm \ref{alg1} provides the rational function $r_N(z)$ in barycentric form 
$r_{N}(z)  = \frac{\tilde{p}_{N}(z)}{\tilde{q}_{N}(z)}$ with 
\begin{equation}\label{bar-form-1}
\tilde{p}_{N}(z) := \sum_{j=1}^{N+1} \frac{w_j \, f(z_j)}{z-z_j}, \qquad  \tilde{q}_{N}(z) := \sum_{j=1}^{N+1} \frac{w_j}{z-z_j},
\end{equation}
which are determined by the output parameters of this algorithm. 

\begin{remark}
Observe that the interpolation condition $r_N(z_j) = f(z_j)$ is only satisfied for the component $z_j$ of ${\mathbf S}$, if $w_j\neq 0$. If $w_j=0$ occurs, then $f(z_j)$ is a ``non-achievable'' value for this rational interpolation problem. We will use this property  of the modified AAA algorithm to detect and to reconstruct also $P$-periodic terms in exponential sums.
\end{remark}

In order to rewrite $r_N(z)$ in (\ref{bar-form-1}) in the form  of a partial fraction decomposition,
\begin{equation}\label{rn1} 
r_{N}(z) = \sum_{j=1}^{N} \frac{g_{j}}{z-\rho_{j}},
\end{equation}
we need to determine $g_1, \ldots , g_N$ and $\rho_1, \ldots , \rho_N$ from the output of Algorithm \ref{alg1}.

The zeros of denominator $\tilde{q}_{N}(z)$ are the poles $\rho_j$ of $r_N(z)$ and can be computed by solving an $(N+2)\times (N+2)$ generalized eigenvalue problem (see \cite{AAA} or \cite{PP2020}),
\begin{equation}\label{eig} \left( \begin{array}{ccccc}
0 & w_1 & w_2 & \ldots & w_{N+1} \\
1 & z_1 &   &     & \\
1 & & z_2 & & \\
\vdots & & & \ddots & \\
1 & & & & z_{N+1} \end{array} \right) \, {\mathbf v}_{\rho}= \rho \left( \begin{array}{ccccc}
0 & & & & \\
& 1 & & & \\
& & 1 & & \\
& & & \ddots & \\
& & & & 1 \end{array} \right) \, {\mathbf v}_{\rho}.
\end{equation}
Two eigenvalues of this generalized eigenvalue problem are infinite and the other $N$ eigenvalues are the wanted zeros $\rho_j$ of $\tilde{q}_{N}(z)$ (see \cite{PP2020} for more detailed explanation). 
We apply the following Algorithm \ref{alg2} to the output of Algorithm \ref{alg1}.

\begin{algorithm}[Reconstruction of parameters $g_j$ and $\rho_j$ of partial fraction representation]
 \label{alg2}
\textbf{Input: } $\mathbf{S}\in \zz^{N+1}$,
  ${\mathbf{f}}_{\mathbf{S}} \in \cc^{N+1}$,
   $\mathbf{w} \in \cc^{N+1}$ the output vectors of Algorithm \ref{alg1}.

\begin{itemize}
\item Build the matrices in (\ref{eig}) and solve this eigenvalue problem to find the  vector $\rho^{T}=(\rho_1, \ldots, \rho_N)^{T}$ of the $N$ finite eigenvalues;

\item Build the matrix $\mathbf{V}=\left(\frac{1}{z_{k}-\rho_j} \right)_{z_{k}\in \mathbf{S}, \, j=1,\ldots,N}\in \rr^{(N+1)\times N} $ and solve the linear system
$$
\mathbf{V} \mathbf{g}={\mathbf{f}}_{\mathbf{S}} .
$$
\end{itemize}
\textbf{Output: } Parameter vectors $\boldsymbol{ \rho}= (\rho_j)_{j=1}^{N}$, ${\mathbf g} = (g_j)_{j=1}^{N}$ determining $r_N$ in (\ref{rn1}). 
\end{algorithm}

\section{Recovery of Proper  Exponential Sums}
\label{proper-exp}

In this section we study the recovery of proper exponential sums  $y \in\mathcal{Y}_N^{0}$ of the form
\begin{equation}\label{func-def-4}
y(t)=\sum_{j=1}^N  \gamma_{j}  {\mathrm e}^{2\pi  \lambda_j t} , \quad  \gamma_{j} \in \cc\setminus \{0\}, \;  \lambda_j \in {\mathbb C},
\end{equation}
where $\lambda_{j}$ are assumed to be pairwise distinct. Note that we use frequencies $2\pi  \lambda_j $ instead of $ \lambda_j $ just for convenience.  We want to recover this exponential sum from a small set of Fourier coefficients of $y$ obtained from the Fourier series expansion on a finite interval $[0,P] \subset {\mathbb R}$ of length $P>0$.
First we study the special structure of  the Fourier coefficients  of $y(t)$.

\begin{lemma} \label{thpr2}
The function $\phi(t)=\gamma  {\mathrm e}^{2\pi \lambda t}$ with $\gamma \in \cc\setminus \{0\}$ and $\lambda \in {\mathbb C}$  can be expanded into a Fourier series   of the form
$
\phi(t)=\sum\limits_{k\in \zz} c_k(\phi) {\mathrm e}^{2\pi  \mathrm{i} \, k t/P}
$ 
on the finite interval $[0,P] \subset \rr$,
and the Fourier coefficients $c_k(\phi)$ for $k \in {\mathbb Z}$  are given by
\begin{equation}\label{coef-complex}
c_{k}(\phi) = \frac{1}{P} \int_{0}^{P} \phi(t) \, {\mathrm e}^{-2\pi {\mathrm i}kt/P} \, {\mathrm d} t = 
\begin{cases}
 \frac{\gamma  \, (1-{\mathrm e}^{2\pi \lambda P}) }{2\pi {\mathrm i}(k+{\mathrm i} \lambda P )}, & k \neq - {\mathrm i} \lambda P, \\
\gamma, & k = - {\mathrm i}\lambda P.
\end{cases}
\end{equation}

In particular, $c_{k}(\phi) \neq 0$ for all $k \in {\mathbb Z}$ if $ -{\mathrm i}  \lambda \not\in \frac{1}{P} {\mathbb Z}$, and
$c_{k}(\phi) = \gamma \, \delta_{k,\ell} $ for all $k \in {\mathbb Z}$ if $ -{\mathrm i} \lambda =\frac{\ell}{P}$ for some $\ell \in \zz$, where $\delta_{k,\ell}$ denotes the usual Kronecker symbol.
\end{lemma}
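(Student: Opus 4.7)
The proof is essentially a direct evaluation of the integral defining the Fourier coefficient, so the plan is simply to carry out the calculation cleanly and then read off the two special cases mentioned in the ``in particular'' part.

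First I would substitute $\phi(t) = \gamma \, {\mathrm e}^{2\pi \lambda t}$ into the definition of $c_k(\phi)$ and combine the exponentials, obtaining
$$ c_k(\phi) = \frac{\gamma}{P} \int_{0}^{P} {\mathrm e}^{2\pi(\lambda - {\mathrm i}k/P)\, t} \, {\mathrm d} t. $$
The critical observation is whether the exponent vanishes, i.e., whether $\lambda = {\mathrm i}k/P$, which rearranges to $k = -{\mathrm i}\lambda P$. I would split into the two cases accordingly.

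In the case $k = -{\mathrm i}\lambda P$ the integrand is the constant $\gamma$, so the integral yields $\gamma \cdot P$ and $c_k(\phi) = \gamma$. In the generic case $k \neq -{\mathrm i}\lambda P$, a standard antiderivative gives
$$ c_k(\phi) = \frac{\gamma}{P} \cdot \frac{{\mathrm e}^{2\pi(\lambda P - {\mathrm i}k)} - 1}{2\pi(\lambda - {\mathrm i}k/P)}. $$
Here I would use the fact that $k \in {\mathbb Z}$ implies ${\mathrm e}^{-2\pi {\mathrm i}k} = 1$, so the numerator simplifies to ${\mathrm e}^{2\pi\lambda P} - 1$. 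A short algebraic manipulation, multiplying numerator and denominator by $-1$ and rewriting $2\pi({\mathrm i}k - \lambda P) = 2\pi {\mathrm i}(k + {\mathrm i}\lambda P)$, puts the result into the stated form.

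For the final assertions, I would note that $c_k(\phi) = 0$ in the generic case can happen only if $1 - {\mathrm e}^{2\pi \lambda P} = 0$, i.e., if $2\pi \lambda P \in 2\pi {\mathrm i} {\mathbb Z}$, which is exactly the condition $-{\mathrm i}\lambda \in \tfrac{1}{P} {\mathbb Z}$. Hence if $-{\mathrm i}\lambda \notin \tfrac{1}{P} {\mathbb Z}$ all Fourier coefficients are nonzero; and if $-{\mathrm i}\lambda = \ell/P$, then $1 - {\mathrm e}^{2\pi\lambda P} = 0$ kills every $k \neq \ell$, while $k = \ell$ falls into the second case of the formula and yields $\gamma$, giving $c_k(\phi) = \gamma \, \delta_{k,\ell}$. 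There is no genuine obstacle; the only point requiring care is that the identification $2\pi(\lambda P - {\mathrm i}k) = 2\pi {\mathrm i}(k + {\mathrm i}\lambda P)$ (up to a sign absorbed into $1 - {\mathrm e}^{2\pi\lambda P}$) be carried out without errors, and that the vanishing criterion for $1 - {\mathrm e}^{2\pi\lambda P}$ be matched precisely to the hypothesis on $\lambda$.
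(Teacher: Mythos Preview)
Your proposal is correct and follows essentially the same direct computation as the paper: substitute, split on whether the exponent vanishes, and evaluate. You actually go slightly further than the paper by spelling out the argument for the ``in particular'' clause, which the paper leaves to the reader.
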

\begin{proof} Since the function $\phi(t)$ is differentiable on $\rr$, the Fourier coefficients of $\phi$ and the Fourier series expansion are well-defined (see, for example, \cite[Chapter 1]{Pbook}). 
A simple computation yields for $k \neq - {\mathrm i} \lambda P$
\begin{align*}
c_k(\phi) &= \frac{1}{P} \int_0^P \phi(t) \, {\mathrm e}^{-2\pi  \mathrm{i} \, k t/P} \, {\mathrm d} t  = \frac{\gamma}{P} \int_0^P \, {\mathrm e}^{2\pi \left(\lambda- \mathrm{i} \, k/P\right) t}  \, {\mathrm d} t \\
&= \frac{\gamma}{P} \left( \frac{{\mathrm e}^{2\pi(\lambda - {\mathrm i} k/P)P} - 1}{2\pi (\lambda-{\mathrm i} k/P)}\right) =    \frac{\gamma  \, (1-{\mathrm e}^{2\pi \lambda P}) }{2\pi {\mathrm i}(k+{\mathrm i} \lambda P )}.\\
\end{align*}
For $ k = - {\mathrm i}\lambda P$ we simply have 
$
c_k(\phi) =\frac{\gamma}{P} \int_0^P 1 \, {\mathrm d} t =\gamma.
$
\end{proof}

\subsection{Functions Containing only Non-$P$-periodic Terms}
Let us assume first that $-{\mathrm i}\lambda_{j} \not\in \frac{1}{P} {\mathbb Z}$ for $j=1, \ldots , N$ in (\ref{func-def-4}). 
We introduce 
$$
C_j:=-{\mathrm i} \, \lambda_j P, \qquad A_j:=  \frac{\gamma_j  \, (1-{\mathrm e}^{2\pi \lambda_j P}) }{2\pi {\mathrm i}} , \qquad j=1, \ldots, N.
$$
Then, the Fourier coefficients of $y(t)$ in  (\ref{func-def-4}) can by Lemma \ref{thpr2} be written as
\begin{equation}\label{tilde1}
c_k(y) = \sum \limits_{j=1}^{N} \frac{A_j}{k-C_{j}}.
\end{equation}
In other words, the sequence of Fourier coefficients $c_{k}(y)$ is already determined by a rational function 
\begin{equation}\label{tilde3}
r_{N}(z) :=\frac{p_{N-1}(z)}{q_N(z)}= \frac{\sum\limits_{j=1}^{N} A_j \prod\limits_{\mycom{s=1}{s \neq j}}^{N} (z-C_s)}{\prod\limits_{j=1}^{N}(z-C_j)}
\end{equation}
  of type $(N-1,N)$ satisfying $r_{N}(k) = c_{k}(y)$, and moreover, there is a bijection between the parameters sets $\gamma_{j}, \, \lambda_{j}$,  $j=1, \ldots , N$,  determining $y(t)$ in  (\ref{func-def-4}) and $A_{j}, \, C_{j}$, $j=1, \ldots, N$, determining $r_{N}(z)$ in (\ref{tilde3}), where
\begin{equation}\label{beta}
\lambda_{j} = \frac{{\mathrm i} C_{j}}{P}, \qquad 
\gamma_{j} =  \frac{2\pi {\mathrm i} A_{j} }{(1-{\mathrm e}^{2\pi \lambda_{j}P})} =  \frac{2\pi {\mathrm i} A_{j} }{(1-{\mathrm e}^{2\pi {\mathrm i}C_{j}})}.
\end{equation}

We obtain

\begin{theorem}\label{theo2}  Let $y$ be of the form $(\ref{func-def-4})$ with $N \in {\mathbb N}$ and $ \lambda_{j} \in {\mathbb C} \setminus \frac{{\mathrm i}}{P} {\mathbb Z}$ and  $\gamma_{j} \in {\mathbb C} \setminus \{0\}$, where we assume that $ \lambda_{j}$ are pairwise distinct. 
Let $\{c_{k}(y): \, k \in \Gamma\}$ with $\Gamma \subset {\mathbb Z}$ be a set of $L \ge 2N+1$  Fourier coefficients of the Fourier expansion of $y$ on the finite interval $[0,P] \subset {\mathbb R}$.
Then $y$ is uniquely determined by $2N$ of these Fourier
coefficients and Algorithm $\ref{alg1}$ $($with ${\mathbf \Gamma} = (k)_{k \in \Gamma}$ and ${\mathbf f} := (c_{k}(y))_{k \in \Gamma}$ with $c_{k}(y)$ in $(\ref{tilde1}))$  terminates after $N$ steps taking $N + 1$ interpolation points and provides a rational function $r_{N}(z)$ that satisfies  $c_{k}(y) = r_{N}(k)$ for all $k \in {\mathbb Z}$.
\end{theorem}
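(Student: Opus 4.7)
The plan is to leverage the rational structure of the Fourier coefficients established by Lemma \ref{thpr2} and then show that the constrained minimization in Algorithm \ref{alg1} first achieves zero error at iteration $J=N$.

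First I would apply Lemma \ref{thpr2} termwise and use linearity of the Fourier coefficient functional to conclude $c_k(y) = r_N(k)$ for every $k \in \zz$, where $r_N$ is the rational function in (\ref{tilde3}). The hypothesis $-{\mathrm i}\lambda_j \notin \frac{1}{P}\zz$ guarantees $1 - {\mathrm e}^{2\pi \lambda_j P} \neq 0$, so each residue $A_j$ in (\ref{tilde1}) is nonzero; pairwise distinctness of the $\lambda_j$ transfers to the $N$ pairwise distinct poles $C_j \in \cc \setminus \zz$. Hence the partial fraction decomposition of $r_N$ is irreducible, $r_N$ has exact type $(N-1,N)$, and $r_N(k)$ is well-defined at every integer. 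For the uniqueness claim, suppose two type-$(N-1,N)$ rational functions coincide at $2N$ distinct integers. Placed over a common denominator of degree at most $2N$, their difference has a numerator polynomial of degree at most $2N-1$ with $2N$ roots, hence vanishes identically. Therefore the $2N$ parameters $(A_j, C_j)_{j=1}^{N}$ of $r_N$ are uniquely pinned down by any $2N$ Fourier coefficients, and the bijection (\ref{beta}) recovers the original parameters $(\gamma_j, \lambda_j)_{j=1}^{N}$.

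For the iteration, the crucial observation at step $J=N$ is that the parametrization (\ref{bar-form}) with $N+1$ support points, subject to $\mathbf{w}^T \mathbf{f}_{S_{N+1}} = 0$, describes exactly the family of type-$(N-1,N)$ rational functions that interpolate the data at those nodes. Since $r_N$ belongs to this family, there is a feasible weight vector $\mathbf{w}^\star$ with $\|\mathbf{A}_{N+1}\mathbf{w}^\star\|_2 = 0$, and such a minimiser is picked out (after $\ell_2$-normalisation) by the two-smallest-singular-vectors construction used in (\ref{mini1}). The resulting iterate therefore equals $r_N$ on all of $\Gamma$ and the stopping criterion triggers. No earlier termination is possible: at step $J<N$ the iterate has type $(J-1,J)$, and if its error vanished on $\Gamma_{J+1}$ it would agree with $r_N$ at all $L \geq 2N+1$ sample points, forcing equality as rational functions by the uniqueness argument, which is incompatible with the $N>J$ distinct poles of $r_N$.

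The most delicate step I anticipate is the first half of the previous paragraph: producing the weight vector $\mathbf{w}^\star$ explicitly and checking that $(\mathbf{w}^\star)^T \mathbf{f}_{S_{N+1}} = 0$ holds automatically. This reduces to the identity that the leading coefficient of the polynomialised numerator $\sum_j w_j f(z_j) \prod_{i\neq j}(z-z_i)$ equals $\sum_j w_j f(z_j)$, whose vanishing is precisely the side constraint that forces the barycentric quotient to be of type $(N-1,N)$ rather than $(N,N)$; a dimension count then shows that the constrained barycentric family at step $N$ covers every type-$(N-1,N)$ interpolant at the chosen $N+1$ nodes, including $r_N$. Everything else is routine from the pole structure already established.
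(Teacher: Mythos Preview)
Your proposal is correct and the overall architecture matches the paper's, but the key technical step is handled differently. The paper does not argue abstractly that ``the constrained barycentric family at step $N$ covers every type-$(N-1,N)$ interpolant.'' Instead, it exploits the partial-fraction structure $c_k(y)=\sum_{j=1}^N A_j/(k-C_j)$ to obtain an explicit factorisation of the Loewner matrix
\[
{\mathbf A}_{N+1}=\left(\frac{1}{n-C_\ell}\right)_{n\in\Gamma\setminus S_{N+1},\ \ell}\ \mathrm{diag}\big((-A_\ell)_{\ell=1}^N\big)\ \left(\frac{1}{k-C_\ell}\right)_{\ell,\ k\in S_{N+1}},
\]
a product of three rank-$N$ Cauchy-type factors. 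This shows directly that $\mathbf A_{N+1}$ has rank \emph{exactly} $N$, so its null-space is one-dimensional and is spanned by the smallest singular vector $\mathbf v_1$. The Cauchy structure of the right factor then forces every entry of $\mathbf v_1$ to be nonzero, so all $N+1$ interpolation conditions are active. Finally, since the resulting barycentric interpolant agrees with $r_N$ at $2N+1$ points it must equal $r_N$, and because $r_N$ is of type $(N-1,N)$ the side condition $\mathbf w^T\mathbf f_{S_{N+1}}=0$ is satisfied \emph{a posteriori}. So the paper moves from the kernel vector to the rational function, whereas you move from the rational function to a weight vector in the kernel; both directions work, but the paper's factorisation yields the extra information (one-dimensional kernel, no vanishing weight) without a separate argument.

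Your contradiction argument ruling out termination at steps $J<N$ is a welcome addition that the paper leaves implicit; note that the same Loewner factorisation, applied with $J+1\le N$ columns in the right Cauchy factor, immediately gives $\mathrm{rank}\,\mathbf A_{J+1}=J+1$ and hence $\mathbf A_{J+1}\mathbf w\neq\mathbf 0$ for every nonzero $\mathbf w$, which is an alternative and slightly sharper way to see non-termination.
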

\begin{proof}
A rational function $r_{N}(z)= p_{N-1}(z)/q_{N}(z)$ with polynomials $p_{N-1}(z)$ of degree at most $N-1$ and $q_{N}(z)$ of degree exactly $N$, is already completely determined by $2N$ (independent) interpolations conditions $r_{N}(k) = c_{k}(y)$, if we can assume that the rational interpolation problem is solvable at all. But solvability can be assumed since we know that the coefficients $c_{k}(y)$ possess the structure given in (\ref{tilde3}). Linear independence of the conditions follows also from  (\ref{tilde3}), since the coefficients $c_{k}(y)$ cannot be presented by a rational function of smaller type than $(N-1, N)$.

Algorithm \ref{alg1} chooses at the $N$th step a set  $S_{N+1} \subset \Gamma$ of $N+1$ indices for interpolation such that $f(k) := c_{k}(y)$ for $k \in S_{N+1}$ (i.e. ${\mathbf S}= (k)_{k \in S_{N+1}}$ and ${\mathbf f}_{\mathbf S}= (c_{k}(y))_{k \in S_{N+1}}$ in Algorithm \ref{alg1}), and builds the matrix ${\mathbf A}_{N+1}= \left( \frac{c_{n}(y)-c_{k}(y)}{n-k} \right)_{n\in \Gamma \setminus S_{N+1},k \in S_{N+1}} \in {\mathbb C}^{L-N-1 \times N+1}$. Using the known structure of $c_{k}(y)$ in (\ref{tilde1}) we find the factorization
\begin{align*}
{\mathbf A}_{N+1} &= \left( \frac{\sum_{j=1}^{N} A_{j}  \Big(\frac{1}{n -C_{j}}- \frac{1}{k -C_{j}}\Big)}{n-k} \right)_{n\in \Gamma \setminus S_{N+1},k \in S_{N+1}}\\
&= \left( \frac{1}{n-C_{\ell}} \right)_{n \in \Gamma \setminus S_{N+1}, \ell=1, \ldots , N} \, \mathrm{diag} \, \Big( (-A_{\ell})_{\ell=1}^{N}\Big) \, \left( \frac{1}{k-C_{\ell}} \right)_{\ell=1, \ldots , N, k \in S_{N+1} }.
\end{align*}
The matrix ${\mathbf A}_{N+1}$ has exactly the rank $N$, since all three matrix factors have full rank $N$. Thus, there is a right (normalized) singular vector ${\mathbf v}_{1}$ of ${\mathbf A}_{N+1}$ to the singular value $\sigma_{1}=0$, i.e., ${\mathbf A}_{N+1} {\mathbf v}_{1} = {\mathbf 0}$, and the factorization above implies that also $ \left( \frac{1}{k-C_{\ell}} \right)_{\ell=1, \ldots , N, k \in S_{N+1} } {\mathbf v}_{1} = {\mathbf 0}$. Since any $N$ columns of  $ \left( \frac{1}{k-C_{\ell}} \right)_{\ell=1, \ldots , N, k \in S_{N+1} }$
are linearly independent, it follows that all components of ${\mathbf v}_{1}$ are nonzero. 
We conclude that the choice of a weight  vector $\tilde{\mathbf w} = {\mathbf v}_{1}$ ensures that the rational function $r_N(z)$ in barycentric form (\ref{bar-form-1}) constructed by Algorithm \ref{alg1} satisfies all interpolation conditions $r_{N}(k) = c_{k}(y)$ for $k \in S_{N+1}$ and moreover $r_{N}(n) = c_{n}(y)$ for $n \in \Gamma \setminus S_{N+1}$  by ${\mathbf A}_{N+1} \tilde
{\mathbf w} ={\mathbf 0}$. 
Since this rational  function $r_{N}(z)$ satisfies  $2N+1$ interpolation conditions, it follows that it coincides with the rational function $r_{N}(z)$ in (\ref{tilde3}).  But $r_{N}(z)$ in (\ref{tilde3}) is of type $(N-1,N)$, it follows that $\tilde{\mathbf w}$ also satisfies the second side condition of minimization problem (\ref{mini1}). Thus,    Algorithm \ref{alg1}  will provide the weight vector ${\mathbf w}= \tilde{\mathbf w}={\mathbf v}_{1}$ at the $N$th iteration step.
\end{proof}

The proof of Theorem \ref{theo2} implies that  in the considered case the kernel vector ${\mathbf w}= {\mathbf v}_{1}$ of the Loewner matrix ${\mathbf A}_{N+1}$  already satisfies  the two side conditions. Therefore, the modification of the original AAA-algorithm that ensures that the resulting rational approximant is of type $(N-1, N)$ is not needed in this case. 
The reconstruction of $y(t)$ in (\ref{func-def-4}) can now be summarized as follows.
\begin{algorithm}[Reconstruction of the parameters $\lambda_{j}, \, \gamma_{j}$  in  (\ref{func-def-4})] 
\label{alg:proper}
\textbf{Input: } ${\mathbf \Gamma} = (k)_{k \in \Gamma} \subset {\mathbb Z}$,  ${\mathbf f} := (c_{k}(y))_{k \in \Gamma}$ with $c_{k}(y)$ in $(\ref{tilde1}))$,  
\begin{description}
\item {1)} Apply Algorithm \ref{alg1} with ${\mathbf \Gamma} = (k)_{k \in \Gamma}$ and ${\mathbf f} := (c_{k}(y))_{k \in \Gamma}$ with $c_{k}(y)$, $tol = 10^{-13}$
in $(\ref{tilde1})$. We obtain ${\mathbf S} \in {\mathbb Z}^{N+1}$, ${\mathbf f}_{S} = (c_{k}(y))_{k \in {\mathbf S}}$ and ${\mathbf w} \in {\mathbb C}^{N+1}$.
\item{2)} Apply Algorithm \ref{alg2} to obtain $A_{j} = g_{j}$ and $C_{j} = \rho_{j}$, $j=1, \ldots, N$. 
\item{3)} Apply (\ref{beta}) to compute $\lambda_{j}, \, \gamma_{j}$, $j=1, \ldots , N$.
\end{description}
\textbf{Output: } $\lambda_{j}, \, \gamma_{j}$, $j=1, \ldots , N$, determining $y(t)$ in (\ref{func-def-4}).
\end{algorithm}

\subsection{Functions Containing also $P$-periodic Terms} 
\label{sec:periodic}
Now, we assume that the exponential sum (\ref{func-def-4}) contains beside non-$P$-periodic terms  $y_j(t)=\gamma_j \, {\mathrm e}^{2\pi  \lambda_j t}$ with $-{\mathrm i}\lambda_j \not \in \frac{1}{P} \zz$  also $P$-periodic terms with $-{\mathrm i} \lambda_j \in \frac{1}{P} \zz$. As seen in Lemma \ref{thpr2}, each $P$-periodic term provides only one non-zero Fourier coefficient, i.e.,  
$c_{k}(\gamma_j \, {\mathrm e}^{2\pi  \lambda_j t}) = \gamma_{j} \delta_{k,-{\mathrm i} \lambda_{j}P}$. 
Therefore, we 
assume that the index set $\Gamma$ of given Fourier coefficients $c_k(y)$ contains all integers $\{-{\mathrm i} \lambda_{j} P: j=1, \ldots , N\} \cap {\mathbb Z}$.
If $c_{k}(y)$ with $k=-{\mathrm i} \lambda_{j} P$ is not provided, then the term $y_j(t)= \gamma_{j} {\mathrm e}^{2\pi \lambda_{j}t}$ cannot be identified from the given data. 

Now the function $y$ in (\ref{func-def-4}) can be written as $y=y^{(1)}+y^{(2)}$, where 
\begin{equation}\label{y12}
y^{(1)}(t):=\sum_{j=1}^{N_1}  \gamma_{j} \,  {\mathrm e}^{2\pi \lambda_j t} \; \mathrm{with} \;    -{\mathrm i}\lambda_j \not \in \frac{1}{P} {\mathbb Z}, \quad y^{(2)}(t):=\sum_{j=N_1+1}^{N} \!\!\! \gamma_{j} \,  {\mathrm e}^{2\pi \lambda_j t} \; \mathrm{with} \;   -\mathrm{i} \lambda_j  \in \frac{1}{P} {\mathbb Z},
\end{equation}  
and $N_1<N$. The part $y^{(1)}(t)$ is non-$P$-periodic with the Fourier coefficients
$$
c_k(y^{(1)})= \sum \limits_{j=1}^{N_1} \frac{A_j}{k-C_j }=\frac{p_{N_1-1}(k)}{q_{N_1}(k)}=r_{N_1}(k), \qquad k \in {\mathbb Z},
$$
and the part $y^{(2)}(t)$ is $P$-periodic. We denote $\Sigma:=\{ -{\mathrm i} \lambda_j P: \, j=N_{1}+1, \ldots , N\}$. Then 
$$
c_k(y^{(2)})=
\begin{cases}
\gamma_j, & k \in \Sigma,\\
0, & k \not \in \Sigma.
\end{cases}
$$
The reconstruction of $y(t)$ is now based on the observation that all but $c_{k}(y)$, $k \in \Sigma$, still have the structure of  a rational function $r_{N_{1}}$, i.e., $c_{k}(y) = r_{N_{1}}(k)$ for $k \in {\mathbb Z}\setminus \Sigma$,  and $y^{(1)}(t)$ can be reconstructed by Algorithm \ref{alg1} while the $P$-periodic part $y^{(2)}(t)$ can be determined in a post-processing step. 

\begin{theorem} \label{theoper}
Let y in $(\ref{func-def-4})$ be of the form $y(t) = y^{(1)}(t) + y^{(2)}(t)$ as in $(\ref{y12})$, where we assume that $ \lambda_{j}$ are pairwise distinct. 
Let $\{c_{k}(y): \, k \in \Gamma\}$ with $\Gamma \subset {\mathbb Z}$ be a set of $L \ge 2N+2$  Fourier coefficients of the Fourier expansion of $y$ on the finite interval $[0,P] \subset {\mathbb R}$ with $P>0$, and assume that the (unknown) index set $\Sigma$ of non-zero Fourier coefficients of $y^{(2)}(t)$ is a subset of $\Gamma$.
Then $y^{(1)}$ and $y^{(2)}$ can be uniquely recovered from this set of Fourier coefficients. Algorithm $\ref{alg1}$ $($with ${\mathbf \Gamma} = (k)_{k \in \Gamma}$ and ${\mathbf f} := (c_{k}(y))_{k \in \Gamma}$ $)$ terminates after at most $N+1$ steps  and provides a rational function $r_{N_{1}}(z)$ of type $(N_{1}-1,N_{1})$ that satisfies $c_{k}(y) = r_{N_{1}}(k)$ for all $k \in {\mathbb Z}\setminus \Sigma$.
\end{theorem}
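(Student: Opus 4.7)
The plan is to reduce the theorem to an application of the machinery of Theorem \ref{theo2} restricted to the non-spike indices, plus a post-processing step that extracts the $P$-periodic spikes. Setting $A_j = \gamma_j(1-{\mathrm e}^{2\pi\lambda_j P})/(2\pi\mathrm{i})$ and $C_j = -\mathrm{i}\lambda_j P$ for $j=1,\ldots,N_1$, Lemma \ref{thpr2} writes the Fourier coefficients of $y$ in hybrid form: the non-$P$-periodic part contributes the rational function
\[ r_{N_1}(z) = \sum_{j=1}^{N_1} \frac{A_j}{z-C_j} \]
of type $(N_1-1, N_1)$, while $y^{(2)}$ contributes only isolated Kronecker spikes at the indices in $\Sigma$. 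Hence $c_k(y) = r_{N_1}(k)$ for $k \in \mathbb{Z}\setminus\Sigma$ and $c_k(y) = r_{N_1}(k) + \gamma_{j(k)}$ for $k \in \Sigma$, where $\gamma_{j(k)}$ is the coefficient corresponding to the frequency $\lambda_{j(k)} = \mathrm{i}k/P$. Since $|\Sigma| = N - N_1$ and $L \geq 2N+2$, the subset $\Gamma\setminus\Sigma$ contains at least $2N_1+2$ indices, which is enough (by Theorem \ref{theo2}) to uniquely determine $r_{N_1}$.

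Next, I would analyze the greedy execution of Algorithm \ref{alg1} on the full data $(k, c_k(y))_{k\in\Gamma}$. When the algorithm selects an index from $\Gamma\setminus\Sigma$, the Loewner-matrix factorization from the proof of Theorem \ref{theo2} carries over and contributes one non-zero weight that drives the effective rational approximant toward $r_{N_1}$. When it selects an index $k\in\Sigma$, the added value $r_{N_1}(k)+\gamma_{j(k)}$ is ``non-achievable'' for any rational function of type $(N_1-1,N_1)$ that matches the remaining data, so the constrained minimization (\ref{mini1}) admits an optimizer with $w_k = 0$ (cf.\ the remark following Algorithm \ref{alg1}). A zero weight removes the corresponding node from the barycentric representation (\ref{bar-form-1}) without increasing the effective type of the current approximant. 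After at most $N_1+1$ productive picks and $N-N_1$ spike picks have been absorbed, the approximant coincides with $r_{N_1}$, the residual on the remaining $\mathbf{\Gamma}$ vanishes, and Algorithm \ref{alg1} terminates within at most $N+1$ steps with the claimed rational output.

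The recovery is then completed as follows. Deleting the zero-weight nodes from the output leaves $r_{N_1}$ in barycentric form; feeding this into Algorithm \ref{alg2} and inverting via (\ref{beta}) yields the parameters $\lambda_j, \gamma_j$, $j = 1,\ldots,N_1$, that determine $y^{(1)}$. The indices $k \in \Sigma$ are precisely the nodes of $\mathbf{S}$ carrying weight zero, and for each such $k$ we read off $\lambda_{j(k)} = \mathrm{i}k/P$ and $\gamma_{j(k)} = c_k(y) - r_{N_1}(k)$, which recovers $y^{(2)}$. Uniqueness of the full decomposition follows from Theorem \ref{uniq} together with Remark \ref{uniq1}.

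The principal obstacle is the second step: making rigorous that every spike pick produces a zero component of the optimal weight vector and that this choice is compatible with both side constraints $\|\mathbf{w}\|_2 = 1$ and $\mathbf{w}^T\mathbf{f}_S = 0$. This reduces to analyzing the kernel of $\mathbf{A}_{J+1}$ under rank-one additive perturbations localized at the spike indices, an extension of the rank computation performed in the proof of Theorem \ref{theo2}.
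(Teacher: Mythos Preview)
Your proposal identifies the correct structure---the data split into rational values $r_{N_1}(k)$ plus finitely many Kronecker spikes---and the post-processing is essentially right. However, the core of your argument differs from the paper's and carries the gap you yourself flag.

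You attempt a \emph{step-by-step} analysis of the greedy selection: each ``spike pick'' should force the corresponding barycentric weight to vanish, while ``productive picks'' build up $r_{N_1}$. The difficulty you name is real. The minimizer of (\ref{mini1}) is obtained in Algorithm~\ref{alg1} as a specific linear combination of the two smallest right singular vectors of $\mathbf{A}_{J+1}$; you would have to show that this particular combination has a zero component at every spike index currently in $S$, not merely that \emph{some} admissible $\mathbf{w}$ does. That requires understanding how a rank-one perturbation localized at a spike column interacts with the bottom two singular subspaces, and it is not clear how to do this uniformly over all intermediate steps and all possible selection histories.

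The paper sidesteps this entirely by a \emph{global} rank argument at the terminal step. One waits until the $(N{+}1)$-st iteration, where $|S|=N{+}2$, and shows directly that $\mathbf{A}_{N+2}$ has rank at most $N$: deleting the rows and columns indexed by $\Sigma$ leaves a Loewner block of rank exactly $N_1$ (by the factorization from Theorem~\ref{theo2}), and reinserting the deleted rows and columns can raise the rank by at most $|\Sigma|=N-N_1$. A two-dimensional kernel then guarantees a weight vector meeting both side conditions, and since any $N_1$ of the non-spike columns are independent, at least $N_1{+}1$ components of $\mathbf{w}$ are nonzero, so the barycentric form genuinely interpolates $r_{N_1}$. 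No per-step tracking of the greedy choices is needed.

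A smaller point: your recovery of $\Sigma$ as ``the nodes of $\mathbf{S}$ carrying weight zero'' presumes that every spike index is eventually selected. The paper's proof does not establish this (it is relegated to a remark), and its post-processing instead inspects $c_k(y)-r_{N_1}(k)$ over all of $\Gamma$, which is safer.
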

\begin{proof}
The proof employs similar ideas as the proof of Theorem 5.1 in \cite{PP2020} despite the different context. We therefore only sketch the main ideas of the proof.

At the $(N+1)$-th  iteration step, Algorithm \ref{alg1}  has chosen a set 
 $S \subset \Gamma$ of $N+2$ indices used for interpolation.
Let $\Gamma_{S} := \Gamma \setminus S$. Then we obtain the matrix 
$${\mathbf A}_{N+2} = \left( \frac{c_{\ell}(y)
- c_{k}(y)}{\ell-k} \right)_{\ell \in \Gamma_{S}, k \in S} \in {\mathbb C}^{L-N-2 \times N+2}. 
$$
We show that ${\mathbf A}_{N+2}$ has at most rank $N$. Let $s$ be the number of indices of $\Sigma$ being contained in $S$.
We consider the partial matrix ${\mathbf A}_{11}$ of  ${\mathbf A}_{N+2}$ obtained by deleting the rows and columns corresponding to indices in $\Sigma$. 
Then, ${\mathbf A}_{11}$ has at least $N+2-s \ge N_{1}+2$ columns and, similarly as in the proof of Theorem \ref{theo1}, it follows from a factorization argument that ${\mathbf A}_{11}$ has rank $N_{1}$.
Therefore, the submatrix of ${\mathbf A}_{N+2}$ built with the columns of ${\mathbf A}_{N+2}$ that correspond to the $N+2-s$ columns of ${\mathbf A}_{11}$ has at most rank $N_{1} + (N-N_{1}-s)=N-s$, since there are at most $N-N_{1}-s$ rows of ${\mathbf A}_{N+2}$ which are deleted in ${\mathbf A}_{11}$. We conclude that 
the full matrix ${\mathbf A}_{N+2}$ has at most rank $N$. 

Thus, Algorithm \ref{alg1} finds two vectors ${\mathbf v}_{1}$ and ${\mathbf v}_{2}$ with 
${\mathbf A}_{N+2} {\mathbf v}_{1} = {\mathbf A}_{N+2}  {\mathbf v}_{2} = {\mathbf 0}$ and thus, there is a weight vector ${\mathbf w}$ with ${\mathbf A}_{N+2} {\mathbf w}= {\mathbf 0}$ satisfying the side conditions $(c_{k})_{k \in S}^{T} {\mathbf w} = 0$ and $\|{\mathbf w}\|_{2}=1$. 
Observe that $c_{k}(y)= c_{k}(y^{(1)})$ for all $k \in \Gamma \setminus \Sigma$. 
Since any  $N_{1}$ columns out of the columns of ${\mathbf A}_{11}$ are linearly independent, 
${\mathbf w}$ contains at least $N_{1}+1$ nonzero components corresponding to columns of ${\mathbf A}_{11}$ and therefore, the rational function $r_{N_{1}}$ 
obtained by Algorithm \ref{alg1} indeed interpolates all Fourier coefficients of $y^{(1)}$. 
Therefore, Algorithm \ref{alg1} determines $y^{(1)}$. In a post processing  step we find  all nonzero Fourier coefficients of $y^{(2)}$  by inspecting $c_{k}(y^{(2)}) = c_{k}(y) - r_{N_{1}}(k)$, $k \in \Gamma$, and can determine $y^{(2)}$.
 \end{proof}
 
\begin{remark} 
 Comparing Theorems \ref{theo2} and \ref{theoper} we see that the reconstruction procedure may require $N+1$ instead of $N$ steps
if the exponential sum also contains $P$-periodic components, where $N$ is the order of the corresponding exponential sum. 
In practice, Algorithm \ref{alg1} often terminates after $N$ steps, even if $P$-periodic terms appear.  
The Fourier coefficients that are deteriorated by the P-periodic part can however also produce a Froissart doublet.
Actually, if Algorithm \ref{alg1} stops, then all indices of Fourier coefficients that corresponds to the $P$-periodic components, 
have been taken into the set ${S}$, i.e., $\Sigma \subset S$. Otherwise, these Fourier coefficients would cause an error in the approximation step, since they do not have  the wanted rational structure. 
\end{remark}

To reconstruct the function $y(t)$ in (\ref{func-def-4}), we can again just apply Algorithm \ref{alg:proper} to reconstruct all parameters of the non-periodic part $y^{(1)}(t)$. The set $\Sigma$ can be found by determining all integer poles $C_j$ that are found in the second step of Algorithm \ref{alg:proper}.  The coefficients $\gamma_j$ for $j=N_1+1,...,N$ can be now reconstructed via 
\begin{equation} \label{coef-rec-prop}
\gamma_j=c_{k_j}(y)-c_{k_j}(y^{(1)}).
\end{equation}

\begin{example}
We consider the following proper exponential sum, see Figure \ref{fig2},
\begin{align} \nonumber 
y_{1}(t):=&(3.2+4.5 \mathrm{i})\,  {\mathrm e}^{2\pi \cdot (-1.095+\sqrt{0.0101} \,  \mathrm{i})t} -0.55 \, {\mathrm e}^{-2\pi \cdot 2.647 \mathrm{i} t} + (-3.4+0.1 \mathrm{i} ) \,  {\mathrm e}^{2\pi \cdot 1.3711 \mathrm{i} t} \\
& -0.88 \, {\mathrm e}^{-2\pi \cdot \sqrt{1.89}  t} + (0.542+7.1  \mathrm{i} ) \, {\mathrm e}^{2\pi  \cdot ( -\sqrt{0.47}+3.217 \mathrm{i} )t} +(-0.96+1.06  \mathrm{i}) \, {\mathrm e}^{-2\pi \cdot 2 \mathrm{i}} ,\label{y2}
\end{align}
i.e., $y_{1}(t)$ is of the form (\ref{func-def-4})  with $N=6$ and  the parameter vectors
\begin{align*}
\blambda&=(-1.095+\sqrt{0.0101} \,  \mathrm{i}, \, -2.647 \, \mathrm{i}, \,  1.3711 \, \mathrm{i},\, -  \sqrt{1.89}, \, -\sqrt{0.47}+3.217 \, \mathrm{i}, \, -2 \,  \mathrm{i}),\\
 \gamra &=(3.2+4.5 \mathrm{i}, \, -0.55,\,  -3.4+0.1 \mathrm{i}, \,  -0.88, \, 0.542+7.1\mathrm{i}, \, -0.96+1.06  \mathrm{i}).
\end{align*}

\begin{figure}[h]
\centering
\begin{subfigure}{0.46\textwidth}
\includegraphics[width=\textwidth]{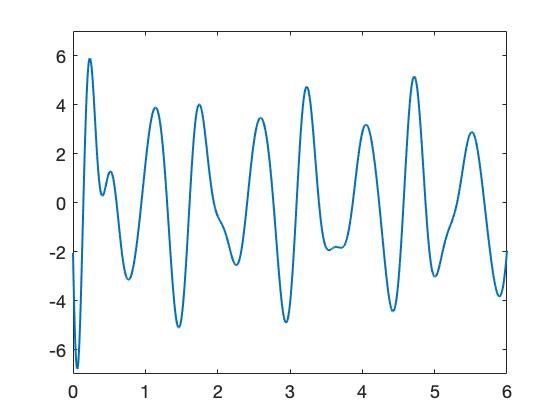}   
\end{subfigure}
\begin{subfigure}{0.46\textwidth}
\includegraphics[width=\textwidth]{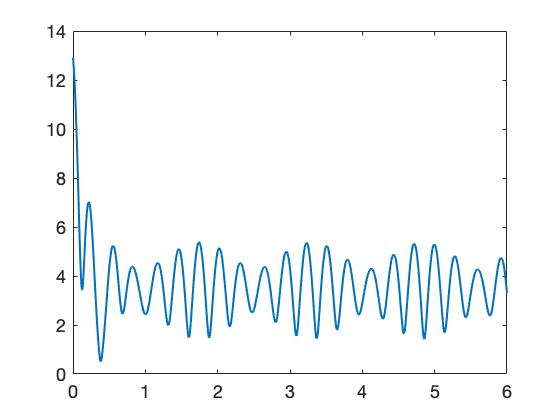}   
\end{subfigure}
\caption{Graph of $\mathrm{Re} \, y_{1}(t)$ (left) and  of $|y_{1}(t)|$ (right) for $y_{1}(t)$ in (\ref{y2}) on  $[0,6]$. }
\label{fig2}
\end{figure}

\noindent
We use  $P=6$, and employ the $59$ Fourier coefficients $c_k(y_{1})$, $k=-29,\ldots,29$, to recover $y_{1}(t)$. The function $y_{1}(t) = y_{1}^{(1)}(t) + y_{1}^{(2)}(t)$ contains one $6$-periodic term and 5 non-$6$-periodic terms,
\begin{align*} 
y_{1}^{(2)}(t) &= (-0.96+1.06  \mathrm{i}) \, {\mathrm e}^{-2\pi \cdot 2 \mathrm{i}} , \\
y_{1}^{(1)}(t) &= y_{1}(t) - y_{1}^{(2)}(t).
\end{align*} 
Algorithm \ref{alg1} iteratively employs the 7 Fourier coefficients $c_{8}(y_{1})$, $c_{-12}(y_{1})$,  $c_{9}(y_{1})$, $c_{-13}(y_{1})$,  $c_{-16}(y_{1})$, $c_{20}(y_{1})$ and $c_{0}(y_{1})$ (in this order) for interpolation before it stops  after 6 iteration steps with error $3.6 \cdot 10^{-16}$.
Here, the  Fourier coefficient $c_{-12}(y_{1})$, 
which contains information about $y_{1}^{(2)}$, is already taken. The obtained rational function $r(z)$  is already completely determined by the remaining $6$ Fourier coefficients 
$c_{k}(y_{1}) = c_{k}(y_{1}^{(1)})$, $k=8, \, 9, \, -13, \, -16, \, 20, \, 0$. Therefore, Algorithms \ref{alg1} provides $r(z)$ that interpolates $c_{k}(y_{1}^{(1)})$ for all $k$, while it does not interpolate $c_{k}(y_{1}) \neq c_{k}(y_{1}^{(1)})$ for $k=-12$. Indeed we observe that 
the second component of ${\mathbf  w} \in \cc^{7}$ vanishes, indicating that $c_{-12}(y)$  is a ``non-achievable'' point for this rational interpolation. 
After skipping this vanishing term in ${\mathbf w}$ and in ${\mathbf S}$ correspondingly, we obtain a rational function $r_5(z)$ in the barycentric form of type $(4,5)$ that is determined by
$$
 {\mathbf  S}=\left(
\begin{matrix}
  8\\
     9\\
   -13\\
   -16\\
    20\\
     0
\end{matrix}
\right), \, \, \, \, \, \, \, \, \, 
 {\mathbf  w}=\left(
\begin{matrix}
  -0.096270855302241 - 0.103263179235592 \mathrm{i} \\
 -0.265355731611110 - 0.389226329639419 \mathrm{i} \\
  0.719319960483190 - 0.000900124596615 \mathrm{i} \\
  0.028635901429394 - 0.005047069650253 \mathrm{i} \\
 -0.228142957097121 + 0.028004149922005 \mathrm{i} \\
  0.224577809992588 - 0.369623728522123 \mathrm{i} 
\end{matrix}
\right),
$$
and the vector of Fourier coefficients with indices corresponding to the index vector ${\mathbf S}$.
To reconstruct the non-periodic part $y_{1}^{(1)}(t)$ of $y_{}(t)$, we apply Algorithm \ref{alg:proper}  as described in the previous subsection. 
Finally, we reconstruct the periodic part $y_{1}^{(2)}(t)$.  Comparing the Fourier coefficients with values of  $r_5(z)$ we find the set $\Sigma= \{-12\}$.  According to  (\ref{coef-rec-prop}), $c_{-12}(y)$ already completely covers $y_{1}^{(2)}(t)$. 
 The obtained reconstruction errors are
$$
\| \tilde{\blambda} -\blambda\|_\infty=1.72 \cdot 10^{-12}, \qquad \| \tilde{\gamra} -\gamra\|_\infty=1.69 \cdot 10^{-11},
$$
where $\tilde{\blambda}$ and $\tilde{\gamra}$ denote the reconstructed parameter vectors.
\end{example}

\subsection{Recovery of Real Proper Exponential Sums with Real Frequencies}
In this section, we consider the recovery of real proper exponential sums 
\begin{equation}\label{func-def-2}
y(t)=\sum_{j=1}^N  \gamma_{j}  {\mathrm e}^{2\pi \alpha_j t} , \quad \gamma_{j},\, \alpha_j  \in {\mathbb R}\setminus \{0\}
\end{equation}
from a small number of its Fourier coefficients obtained for the Fourier series expansion of $y$ on a given finite interval $[0,P]$.
In this case, we can derive  a special algorithm in real arithmetic.
We start with studying the structure of the Fourier coefficients of $y$. 
\begin{lemma} \label{thpr1}
The function $\phi(t)=\gamma  {\mathrm e}^{2\pi \alpha t}$ with $\gamma, \, \alpha \in {\mathbb R}\setminus \{0\}$  can be expanded into the Fourier series on the finite interval $[0,P] \subset \rr$ with $P >0$ of the form
$
\phi(t)=\sum\limits_{k\in \zz} c_k(\phi) {\mathrm e}^{2\pi  \mathrm{i} \, k t/P},
$ 
and the Fourier coefficients $c_k(\phi)$ for $k \in {\mathbb Z}$  are given by
$$
c_{k}(\phi) = \frac{1}{P} \int_{0}^{P} \phi(t) \, {\mathrm e}^{-2\pi {\mathrm i}kt/P} \, {\mathrm d} t = \frac{\gamma \,   {\mathrm e}^{\pi \alpha P} \, (P\alpha + {\mathrm i} k)\, \sinh(\pi \alpha P) }{\pi (\alpha^{2} P^{2} + k^{2})}.
$$
\end{lemma}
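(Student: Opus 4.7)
The plan is to deduce this from \cref{thpr2} (the complex version just proved), since the present lemma is simply the real specialization $\lambda = \alpha \in \mathbb{R}\setminus\{0\}$, followed by rewriting the resulting expression in terms of $\sinh$.

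First I would check which branch of \cref{thpr2} applies. Since $\alpha \in \mathbb{R}\setminus\{0\}$, the quantity $-\mathrm{i}\alpha P$ is a nonzero purely imaginary number, hence $-\mathrm{i}\alpha P \notin \mathbb{Z}$. Therefore the generic case of \cref{thpr2} applies for every $k \in \mathbb{Z}$, giving
$$
c_{k}(\phi) \;=\; \frac{\gamma\,(1 - \mathrm{e}^{2\pi\alpha P})}{2\pi\mathrm{i}\,(k + \mathrm{i}\alpha P)}.
$$

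Next I would massage this expression into the desired real-rational form. In the numerator I use the factorization
$$
1 - \mathrm{e}^{2\pi\alpha P} \;=\; -\mathrm{e}^{\pi\alpha P}\bigl(\mathrm{e}^{\pi\alpha P} - \mathrm{e}^{-\pi\alpha P}\bigr) \;=\; -2\,\mathrm{e}^{\pi\alpha P}\,\sinh(\pi\alpha P),
$$
and in the denominator I write $2\pi\mathrm{i}(k + \mathrm{i}\alpha P) = -2\pi(\alpha P - \mathrm{i}k)$. Combining these yields
$$
c_{k}(\phi) \;=\; \frac{\gamma\,\mathrm{e}^{\pi\alpha P}\,\sinh(\pi\alpha P)}{\pi\,(\alpha P - \mathrm{i}k)}.
$$
Multiplying numerator and denominator by the complex conjugate $\alpha P + \mathrm{i}k$ gives $\pi(\alpha^{2}P^{2} + k^{2})$ in the denominator and $(\alpha P + \mathrm{i}k)$ as a factor in the numerator, which is exactly the claimed formula.

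There is no real obstacle here; the statement is essentially an algebraic reformulation of \cref{thpr2} for real parameters. The only point worth stating carefully is that the case distinction in \cref{thpr2} collapses to a single case because $\alpha \in \mathbb{R}\setminus\{0\}$ rules out $-\mathrm{i}\alpha P \in \mathbb{Z}$. (Alternatively, a direct computation of $\frac{1}{P}\int_{0}^{P}\gamma\,\mathrm{e}^{2\pi\alpha t}\mathrm{e}^{-2\pi\mathrm{i}kt/P}\,\mathrm{d}t$ produces the same expression in one step, but reusing \cref{thpr2} avoids any duplication of effort.)
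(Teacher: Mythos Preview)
Your proof is correct. The paper's own proof merely states that it ``follows the same lines as the proof of Lemma~\ref{thpr2}'' (i.e., compute the integral directly again), whereas you invoke Lemma~\ref{thpr2} as a black box and then rewrite the output via $\sinh$; this is a minor economy rather than a genuinely different argument.
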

\begin{proof}
The proof follows the same lines as the proof of Lemma \ref{thpr2}.
\end{proof}

\noindent
Now, the function $y$ in (\ref{func-def-2}) can be rewritten  as
$y(t)=\sum_{j=1}^N  y_j(t)$ with $y_j(t):=\gamma_j  {\mathrm e}^{2\pi \alpha_j t}$,
and, according to Lemma \ref{thpr1},  its Fourier coefficients  satisfy the representation
\begin{equation}\label{coef-cky}
c_k(y)= \sum_{j=1}^N  \frac{A_j + {\mathrm i}k  B_{j}}{C_j+k^{2}}
\end{equation}
with real parameters
\begin{align*}
C_j &:=\alpha_j^{2} P^{2}, \nonumber \\ 
A_j &:=\frac{\gamma_j P \alpha_{j}}{\pi}  {\mathrm e}^{\alpha_{j} \pi P} \, \sinh(\pi \alpha_{j} P)    \\ 
B_j &:= \frac{\gamma_j}{\pi}  {\mathrm e}^{\alpha_{j} \pi P} \, \sinh(\pi \alpha_{j} P), 
\end{align*}
 for all $j=1,\ldots,N$. Conversely, the coefficients $A_j$ and  $B_{j}$,  $j=1,\ldots,N$, uniquely determine the parameters $\alpha_{j}$ and $\gamma_{j}$ of the exponential sum (\ref{func-def-2}). We have 
\begin{equation}\label{alpha}
\alpha_j=\frac{A_{j}}{B_{j}P}
\end{equation}
and 
\begin{equation}\label{gamma1}
\gamma_j= \frac{B_{j}\pi}{{\mathrm e}^{\alpha_{j}\pi P} \sinh(\pi \alpha_{j} P)} =\frac{B_{j}\pi}{{\mathrm e}^{A_{j}\pi/B_{j}} \sinh(\pi A_{j}/ B_{j})}.
\end{equation}
Obviously, we also have $\alpha_j^2 = C_j/P^2$, and $C_j$ can hence be written as $C_j=\left( \frac{A_j}{B_j} \right)^2$.

We consider now the following modification of  Fourier coefficients $c_{k}(y)$ in (\ref{coef-cky}), 
\begin{equation}\label{tilde}
\tilde{c}_{k}(y) := \mathrm{Re} \, c_k(y)  + \frac{{\mathrm i}}{k}   \mathrm{Im} \, c_k(y) = \sum_{j=1}^{N} \frac{A_{j} + {\mathrm i} B_{j}}{k^{2} +C_{j}},
\end{equation}
which can be seen as the sample values of a rational function 
$$r_{N}(z) = \frac{p_{N-1}(z) }{ q_{N}(z)} = \sum_{j=1}^{N} \frac{A_{j} + {\mathrm i} B_{j}}{z +C_{j}} $$
 of type $(N-1,N)$ at $z=k^{2}$.
The reconstruction algorithm is now based on this observation. We obtain

\begin{theorem}\label{theo1}  Let $y$ be of the form $(\ref{func-def-2})$ with $N \in {\mathbb N}$ and $ \alpha_{j}, \, \gamma_{j} \in {\mathbb R} \setminus \{0\}$, where we assume that $ \alpha_{j}$ are pairwise distinct. 
Let $\{c_{k}(y): \, k \in \Gamma\}$,  with $\Gamma \subset {\mathbb N}$ be a set of $L \ge 2N+1$  Fourier coefficients of the Fourier expansion of $y$ on the finite interval $[0,P] \subset {\mathbb R}$.
Then $y$ is uniquely determined by $2N$ of these Fourier
coefficients and Algorithm $\ref{alg1}$ (with ${\mathbf \Gamma} = (k^{2})_{k \in \Gamma}$ and ${\mathbf f} := (\tilde{c}_{k}(y))_{k \in \Gamma}$ with $\tilde{c}_{k}(y)$ in $(\ref{tilde})$ terminates after N steps taking $N + 1$ interpolation points and provides a rational function $r_{N}(z)$ satisfying $r_N(k^2) = \tilde{c}_{k}(y)$ for all $k \in {\mathbb Z}$.
\end{theorem}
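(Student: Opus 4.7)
The plan is to mirror, step for step, the proof of Theorem \ref{theo2}, exploiting that the modified Fourier values $\tilde c_k(y)$ in (\ref{tilde}) are precisely samples of a rational function of type $(N-1,N)$ at the transformed nodes $z=k^{2}$. Concretely, setting
$r_N(z)=\sum_{j=1}^{N}(A_j+\mathrm i B_j)/(z+C_j)$, the data $\{(k^{2},\tilde c_k(y)):k\in\Gamma\}$ form a rational interpolation problem of type $(N-1,N)$. Provided the $C_j=\alpha_j^{2}P^{2}$ are pairwise distinct (which we must impose in addition to distinctness of the $\alpha_j$, since $\alpha$ and $-\alpha$ would collapse to the same $C$), the representation of $r_N$ cannot be reduced to a smaller type, so $2N$ linearly independent interpolation conditions determine $r_N$ uniquely.

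Next I would reproduce the rank argument for the Loewner matrix. At the $N$th iteration step Algorithm \ref{alg1} has selected a set $S_{N+1}\subset\Gamma$ of size $N+1$ and builds
$$
\mathbf A_{N+1}=\left(\frac{\tilde c_n(y)-\tilde c_k(y)}{n^{2}-k^{2}}\right)_{n\in\Gamma\setminus S_{N+1},\,k\in S_{N+1}}.
$$
Inserting the rational structure (\ref{tilde}) and the identity $\tfrac{1}{n^{2}+C_j}-\tfrac{1}{k^{2}+C_j}=\tfrac{k^{2}-n^{2}}{(n^{2}+C_j)(k^{2}+C_j)}$ gives the Cauchy-like factorisation
$$
\mathbf A_{N+1}=\left(\tfrac{1}{n^{2}+C_\ell}\right)_{n,\ell}\,\diag\bigl(-(A_\ell+\mathrm i B_\ell)\bigr)\,\left(\tfrac{1}{k^{2}+C_\ell}\right)_{\ell,k}.
$$
Because the numbers $k^{2}$, $k\in\Gamma\subset\nn$, are pairwise distinct and positive and the $C_\ell$ are pairwise distinct, both outer Cauchy matrices have full rank $N$, and the diagonal factor is nonsingular since $B_\ell=\gamma_\ell\,\mathrm e^{\alpha_\ell\pi P}\sinh(\pi\alpha_\ell P)/\pi\neq 0$ whenever $\gamma_\ell,\alpha_\ell\neq 0$, so $A_\ell+\mathrm i B_\ell\neq 0$. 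Therefore $\rank\mathbf A_{N+1}=N$, its kernel is one-dimensional and, by linear independence of any $N$ columns of a Cauchy matrix, the kernel vector $\mathbf v_1$ has all components nonzero.

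Finally I would set the weight vector $\mathbf w=\mathbf v_1$ and conclude as in Theorem \ref{theo2}: the resulting barycentric rational function $r_{N}(z)$ from (\ref{bar-form-1}) interpolates $\tilde c_k(y)$ at every $k\in S_{N+1}$ because all $w_j\neq 0$, and it matches the data also on $\Gamma\setminus S_{N+1}$ because $\mathbf A_{N+1}\mathbf w=\mathbf 0$. Having $2N+1$ interpolation conditions, this function must coincide with the true $r_N$ of type $(N-1,N)$, whence the second side condition $\mathbf w^{T}\mathbf f_{S_{N+1}}=0$ in (\ref{condw}) holds automatically and the stopping criterion of Algorithm \ref{alg1} is met at iteration $N$. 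The main point requiring care, and the only genuine departure from Theorem \ref{theo2}, is the transition from the index $k$ to the node $k^{2}$: one must verify that distinct $k\in\Gamma$ yield distinct $k^{2}$ (hence choosing $\Gamma\subset\nn$ rather than $\zz$), and that $\alpha_j^{2}$ are pairwise distinct so that $r_N$ is truly of full type $(N-1,N)$; both are ensured by the hypotheses of the theorem together with the choice $\Gamma\subset\nn$ of positive indices used to form the sample points.
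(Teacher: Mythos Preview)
Your proposal is correct and follows exactly the approach the paper indicates: the paper itself does not prove Theorem \ref{theo1} in detail but simply states that ``the proof of Theorem \ref{theo1} can be derived analogously as for Theorem \ref{theo2}'', and you carry out precisely that analogy, replacing the nodes $k$ by $k^{2}$ and the poles $C_j=-{\mathrm i}\lambda_jP$ by $-C_j=-\alpha_j^{2}P^{2}$ in the Loewner factorisation.

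One small point: you rightly observe that the argument needs the $C_j=\alpha_j^{2}P^{2}$ to be pairwise distinct, i.e.\ $|\alpha_j|$ pairwise distinct, which is strictly stronger than the stated hypothesis that the $\alpha_j$ are pairwise distinct; your closing sentence then says this is ``ensured by the hypotheses of the theorem'', which contradicts your earlier (correct) remark that it must be imposed in addition. This is in fact a tacit assumption the paper itself makes without comment, so your flagging it is sharper than the original; just be consistent about whether you regard it as an extra hypothesis or as already implied.
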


The proof of Theorem \ref{theo1} can be derived analogously as for Theorem \ref{theo2}.  In particular, Algorithm \ref{alg1} provides a reconstruction algorithm for the rational function $r_{N}(z)$ that determines the modified  Fourier coefficients of $y$.
For reconstruction of the parameters of $y(t)$ in (\ref{func-def-2}), we thus need again to proceed with the following steps.

\begin{algorithm}[Reconstruction of the parameters $\alpha_j$ and $\gamma_j$ from (\ref{func-def-2})]
\label{alg:proper2}
\textbf{Input: } ${\mathbf \Gamma} = (k^{2})_{k \in \Gamma} \subset {\mathbb Z}$,  ${\mathbf f} := (\tilde{c}_{k}(y))_{k \in \Gamma}$ with $\tilde{c}_{k}(y)$ in $(\ref{tilde}))$,  
\begin{description}
\item{1)}
Apply Algorithm \ref{alg1} to  compute a rational function $r_{N}(z)$ of type $(N-1, N)$ from a set of $L \ge 2N+1$ Fourier coefficients with nonnegative index. Use the input data ${\mathbf \Gamma} = (k^{2})_{k \in \Gamma}$ and ${\mathbf f} := (\tilde{c}_{k}(y))_{k \in \Gamma}$ with $\tilde{c}_{k}(y)$ in $(\ref{tilde})$.
Algorithm \ref{alg1} then provides the vector of used (squared) indices ${\mathbf S}=(z_{j})_{j=1}^{N+1}$ with $z_{j} = k_{j}^{2}$, $k_{j} \in \Gamma \subset {\mathbb N}$, the vector of used modified Fourier coefficients ${\mathbf f}_{{\mathbf S}} = (\tilde{c}_{k_{j}}(y))_{j=1}^{N+1}$ and the weight vector ${\mathbf w}=(w_{j})_{j=1}^{N+1}$ to determine $r_{N}(z)$ of the form (\ref{bar-form-1}). 
\item{2)} Rewrite $r_{N}$ in the form 
$$
r_N(z)= \frac{\tilde{p}_{N}(z)}{\tilde{q}_{N}(z)} = \frac{\sum_{j=1}^{N+1} \frac{w_j \, \tilde{c}_{k_{j}}(y)}{z-k_{j}^{2}}}{\sum_{j=1}^{N+1} \frac{w_j}{z-k_{j}^{2}}} =
\sum \limits_{j=1}^{N} \frac{A_j+\mathrm{i} \, B_j}{z+C_j},
$$
i.e., extract $A_{j}, \, B_{j}, C_{j}$ from ${\mathbf S}, \, {\mathbf f}_{{\mathbf S}}$, and ${\mathbf w}$.
This is done by employing Algorithm \ref{alg2} with the output $g_{j}=A_{j} + {\mathrm i} B_{j}$ and $\rho_{j} = C_{j}$.
\item{3)} 
Compute the parameters $\alpha_{j}$, $\gamma_{j}$ from $A_{j}$ and $B_{j}$, $j=1, \ldots , N$, via (\ref{alpha}) and (\ref{gamma1}).
\end{description}
\textbf{Output: } $\alpha_{j}$, $\gamma_{j}$, $j=1, \ldots , N$, determining $y(t)$ in (\ref{func-def-2}). 
\end{algorithm}

\begin{example}
We consider the following exponential sum, see Fig.\ \ref{fig1},
\begin{align*}
y_2(t) := & -0.00572 \, {\mathrm e}^{-6.74 \cdot 2\pi t}+0.1074 \, {\mathrm e}^{-3.187  \cdot 2\pi  t}-0.685 \,  {\mathrm e}^{-1.312 \cdot 2\pi   t}-0.4264 \,  {\mathrm e}^{-1.212  \cdot 2\pi  t} \\
& +0.4605 \,  {\mathrm e}^{0.223 \cdot 2\pi  t},
\end{align*}
i.e., $y_{2}(t)$ is of the form (\ref{func-def-2}) with $N=5$ and parameter vectors
\begin{align*}
 {\balpha} &=(-6.74, \, -3.187, \, -1.312, \, -1.212, \, 0.223),  \\  \gamra &=(-0.00572, \, 0.1074, \, -0.685, \, -0.4264, \, 0.4605).
\end{align*}

\begin{figure}[h]
\includegraphics[width=8cm]{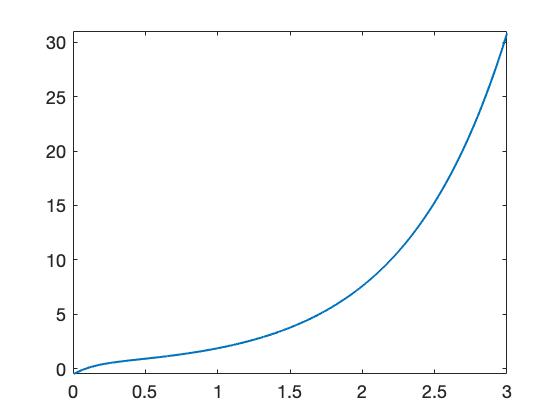}
\centering
\caption{ Graph of the proper exponential sum $y_2$ on  $[0,3]$. }
\label{fig1}
\end{figure}

For the recovery of $y_2$ we choose $P=3$, and employ 40 Fourier coefficients $\tilde{c}_k(y_2)$, $k=1,\ldots, 40$.  Algorithm \ref{alg1} iteratively employs the $N+1$ coefficients $\tilde{c}_1(y_2)$, $\tilde{c}_2(y_2)$, $\tilde{c}_4(y_2)$, $\tilde{c}_{40}(y_2)$, $\tilde{c}_{15}(y_2)$, and $\tilde{c}_{27}(y_2)$ for interpolation (in this order), before it terminates with the error $1.6 \cdot 10^{-16}$.  We get a rational function $r_{5}(z)$ of type (4, 5) that is determined by 
$$
 {\mathbf  S}=\left(
\begin{matrix}
1^{2} \\
2^{2} \\
4^{2} \\
40^{2} \\
15^{2} \\
27^{2}
\end{matrix}
\right), \, \, \, \, \, \, \, \, \, 
 {\mathbf  w}=\left(
\begin{matrix}
 0.000107694973344 - 0.000273615982469\mathrm{i }\\
-0.000627784609123 + 0.001594985329941\mathrm{i }\ \\
0.001597607891293 - 0.004058973590996\mathrm{i }\ \\
-0.287695273175763 + 0.730934994893030\mathrm{i }\ \\
-0.035505720375047 + 0.090207855189328\mathrm{i }\ \\
0.223846148680396 - 0.568716273077864\mathrm{i }\
\end{matrix}
\right)
$$
and ${\mathbf f}_{S} = (\tilde{c}_{k}(y_{2}))_{k^{2} \in {\mathbf S}}$.
We reconstruct parameters  $\tilde{\alpha}_{j}$  and  $\tilde{\gamma}_{j}$, $j=1, \ldots , 5$,  by Algorithm \ref{alg:proper2} with the errors
$$
\| {\balpha} - \tilde{  {\balpha}}\|_{\infty}=5.52 \cdot 10^{-11}, \ \ \ \|\gamra - \tilde{\gamra }\|_{\infty}=2.16 \cdot 10^{-10}.
$$
\end{example}

\section{Recovery of  Extended Exponential Sums}
\label{extended-exp}

In this section we study the recovery of 
extended exponential sums $y \in\mathcal{Y}_N$, 
\begin{equation}\label{func-def-5}
y(t)=\sum_{j=1}^M \left( \sum_{m=0}^{n_j}  \, \gamma_{j,m} \, t^{m} \right) {\mathrm e}^{2 \pi  \lambda_jt}, \qquad  \gamma_{j,m} \in \cc, \, \gamma_{j, n_{j}}\neq 0, \, \lambda_j \in \cc,
\end{equation}
of full order $N:=\sum\limits_{j=1}^{M}(1+n_j)$ from a small set of given Fourier coefficients  obtained  for the Fourier series expansion of $y$ on $[0, P]$. Note  that for convenience we consider again frequencies $2\pi \lambda_j$ instead of $\lambda_j$ for $j=1,\ldots ,M$.

\subsection{Representation of Fourier Coefficients via Rational Functions}
\label{sec5.1}

First we study the structure of Fourier coefficients  of functions  $y(t)$ in (\ref{func-def-5}).
We start with  the following result regarding the expansion of one component in (\ref{func-def-5}) into a Fourier series  on $[0, P]$ 
with some given $P>0$.
\begin{theorem}\label{thpr3}
The function $\phi(t)=\gamma \, t^{m} \, {\mathrm e}^{2\pi \lambda t}$ with $m\in \nn$, $\lambda \in \cc$, and  $\gamma \in \cc \setminus \{ 0 \}$ can be expanded on $[0,P]$ into the Fourier series 
$
\phi(t)=\sum\limits_{k\in \zz} c_k(\phi) \, {\mathrm e}^{2\pi \mathrm{i} k t/P}.
$
If $-{\mathrm i} \lambda \not \in \frac{1}{P} {\mathbb Z}$, the Fourier coefficients $c_k(\phi)$, $k \in {\mathbb Z}$, are given by
\begin{equation}\label{coef-pr2}
c_k(\phi)  = \frac{\gamma \, P^m m!}{(2\pi {\mathrm i})^{m+1} \, (k+ {\mathrm i} \lambda P)^{m+1}} \left( 1 - 
{\mathrm e}^{2\pi \lambda P}  \sum\limits_{\ell=0}^{m} \frac{1}{\ell!} \, (2\pi {\mathrm i})^{\ell} \, (k + {\mathrm i} \lambda P)^{\ell}\right).
\end{equation}
If $-{\mathrm i} \lambda  \in \frac{1}{P} \zz$, i.e., if there exists an $n \in \zz$ with $-{\mathrm i} \lambda P = n$, then
\begin{equation}\label{coef-pr2-per}
c_k(\phi)=
\begin{cases}
\frac{\gamma P^{m}}{m+1}, & k=n, \\
- \frac{\gamma \, P^m  \, m!}{(2\pi {\mathrm i})^{m+1} \, (k-n)^{m+1}}\, \left( \sum\limits_{\ell=1}^m \frac{1}{\ell !} \, (2\pi {\mathrm i})^{\ell} \, (k- n)^{\ell}\right), 
& k \in {\mathbb Z} \setminus\{n\}.
\end{cases}
\end{equation}
\end{theorem}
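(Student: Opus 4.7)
The plan is to evaluate the integral $c_k(\phi) = \frac{1}{P}\int_0^P \gamma t^m \, {\mathrm e}^{2\pi(\lambda - {\mathrm i}k/P)t}\,{\mathrm d}t$ directly via repeated integration by parts, and then algebraically reshape the resulting closed form into the two representations (\ref{coef-pr2}) and (\ref{coef-pr2-per}) depending on whether the exponent $\mu := 2\pi(\lambda - {\mathrm i}k/P)$ vanishes. Existence of the Fourier expansion itself is automatic since $\phi \in C^\infty({\mathbb R})$, so the whole content is computational.

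For the generic case $-{\mathrm i}\lambda \notin \frac{1}{P}\zz$ we have $\mu \neq 0$ for every $k \in \zz$. Here I would start from the elementary identity
$$\int_0^P t^m \, {\mathrm e}^{\mu t}\,{\mathrm d}t = {\mathrm e}^{\mu P}\sum_{j=0}^m \frac{(-1)^{m-j}\, m!\, P^j}{j!\, \mu^{m-j+1}} - \frac{(-1)^m\, m!}{\mu^{m+1}},$$
which is obtained by induction on $m$ using integration by parts with $u=t^m$, $dv={\mathrm e}^{\mu t}{\mathrm d}t$, and the base case $\int_0^P {\mathrm e}^{\mu t}{\mathrm d}t = ({\mathrm e}^{\mu P}-1)/\mu$ from Lemma \ref{thpr2}. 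Substituting $\mu = -\frac{2\pi {\mathrm i}}{P}(k+{\mathrm i}\lambda P)$, and using that $k\in\zz$ forces ${\mathrm e}^{\mu P} = {\mathrm e}^{-2\pi {\mathrm i}k}{\mathrm e}^{2\pi \lambda P} = {\mathrm e}^{2\pi \lambda P}$, I would factor the common prefactor $\frac{\gamma P^m m!}{(2\pi {\mathrm i})^{m+1}(k+{\mathrm i}\lambda P)^{m+1}}$ out of both terms. The index substitution $\ell = m - j$ turns the remaining sum into $\sum_{\ell=0}^{m}\frac{1}{\ell!}(2\pi {\mathrm i})^\ell (k+{\mathrm i}\lambda P)^\ell$, and careful bookkeeping of the powers of $(-1)$ produces exactly (\ref{coef-pr2}).

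For the periodic case $-{\mathrm i}\lambda P = n \in \zz$ the function becomes $\phi(t) = \gamma t^m {\mathrm e}^{2\pi {\mathrm i} n t/P}$, and the integrand reduces to $\gamma t^m {\mathrm e}^{2\pi {\mathrm i}(n-k)t/P}$. When $k=n$ the exponential disappears and a direct integration gives $c_n(\phi) = \gamma P^m/(m+1)$. When $k \neq n$, I would apply the same closed-form identity with $\mu = 2\pi {\mathrm i}(n-k)/P$; this time ${\mathrm e}^{\mu P} = 1$, so the constant term $-(-1)^m m!/\mu^{m+1}$ merges with the $j=0$ contribution of the sum and cancels it cleanly. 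Only the terms with $\ell \geq 1$ survive, which after the same substitution and sign reshuffling yields precisely (\ref{coef-pr2-per}).

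The argument is entirely elementary; the main obstacle is not conceptual but clerical, namely tracking the $(-1)^{m-j}$, factorial, and $(2\pi {\mathrm i})$ factors consistently through the index change $\ell = m-j$ and the substitution $\mu \leftrightarrow (k+{\mathrm i}\lambda P)$ so that the final sum starts at $\ell = 0$ in the non-resonant case and at $\ell = 1$ in the resonant case. A useful built-in sanity check at each stage is the specialization $m=0$, which must reproduce Lemma \ref{thpr2} exactly in both branches of (\ref{coef-complex}); this pins down all sign and normalization choices.
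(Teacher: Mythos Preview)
Your proposal is correct and follows essentially the same route as the paper. The paper also reduces everything to the closed-form antiderivative of $t^m {\mathrm e}^{a t}$ (cited from a table of integrals rather than derived by integration by parts, but it is the same formula after the reindexing $j=m-\ell$), then evaluates at $0$ and $P$ and performs exactly the algebraic reshaping you describe, including the cancellation of the top term in the periodic case that leaves the sum starting at $\ell=1$.
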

\begin{proof}
Since $y(t)$ in (\ref{func-def-5}) is differentiable on $[0, P]$, the Fourier series and all Fourier coefficients are well defined, and we have pointwise convergence in $(0, P)$, see \cite{Pbook}.
Let first $-{\mathrm i}\lambda P \not \in {\mathbb Z}$.  Applying \cite[2.321]{GR}, for $a \neq 0$ the general antiderivative of $t^{m} {\mathrm e}^{a t}$ reads
$$
\int t^{m} {\mathrm e}^{a t} \, dt={\mathrm e}^{a t} \, \left( \sum\limits_{\ell=0}^{m} \frac{(-1)^{\ell} \, \ell! \, \binom{m}{\ell}}{a^{\ell+1}} \, t^{m-\ell}  \right).
$$
Thus, for $k \in {\mathbb Z}$, 
\begin{align*}
 c_k(\phi) & =  \frac{\gamma}{P} \int_0^P \, t^{m} {\mathrm e}^{2\pi  \left( \lambda - \mathrm{i}k/P\right) t}  \, {\mathrm d} t \notag  \\
 & =  \frac{\gamma}{P} {\mathrm e}^{2\pi  \left(\lambda- \mathrm{i}k/P\right)P}  \sum\limits_{\ell=0}^m \frac{(-1)^\ell \ell ! \binom{m}{\ell}}{\left(2\pi  \left(\lambda-\mathrm{i}k/P\right) \right)^{\ell+1}} P^{m-\ell}  - \frac{\gamma}{P}\cdot\frac{(-1)^m m!}{\left(2\pi  \left(\lambda- \mathrm{i}k/P\right) \right)^{m+1}} \\
 & = \frac{\gamma \, P^m m!}{(2\pi {\mathrm i})^{m+1} \, (k + \mathrm{i} \lambda P)^{m+1}} \left( 1 - 
{\mathrm e}^{2\pi \lambda P}  \sum\limits_{\ell=0}^{m} \frac{1}{\ell!} \, (2\pi {\mathrm i})^{\ell} \, (k+ \mathrm{i} \lambda P)^{\ell}\right).
 \end{align*}
Let now $-{\mathrm i} \lambda P \in \zz$. Then there is an $n \in \zz$ such that $- {\mathrm i} \lambda P=n$, and we obtain
$
c_n(\phi)=\frac{\gamma}{P}\int\limits_0^{P} t^{m} \, dt=\frac{\gamma P^{m}}{m+1}.
$
For $k \in {\mathbb Z} \setminus \{n\}$ we have
\begin{align*}
c_k(\phi) & =\frac{\gamma}{P} \int_0^P \, t^{m} {\mathrm e}^{2\pi \mathrm{i} \left(n-k\right) t/P}  \, {\mathrm d} t = \gamma P^{m} \sum\limits_{\ell=0}^{m-1} \frac{(-1)^{\ell} \, \ell! \, \binom{m}{\ell}}{(2\pi \mathrm{i}(n-k))^{\ell+1}} \notag \\
& = - \gamma P^{m} \sum\limits_{\ell=0}^{m-1} \frac{{\mathrm i}^{\ell+1} \, \ell! \, \binom{m}{\ell}}{(2\pi )^{\ell+1} (n-k)^{\ell+1}} =- \frac{\gamma \, P^m  \, m!}{(2\pi {\mathrm i})^{m+1} \, (k-n)^{m+1}}\, \left( \sum\limits_{\ell=1}^m \frac{1}{\ell !} \, (2\pi {\mathrm i})^{\ell} \, (k- n)^{\ell}\right).
\end{align*}
\end{proof}

Thus we also obtain
\begin{corollary}\label{cor1}
Let $y(t)= \sum\limits _{j=1}^{M} y_{j}(t)$ be of the form  $(\ref{func-def-5})$, where
$y_j(t)= \left(\sum\limits_{m=0}^{n_j}\gamma_{j,m} \, t^{m}\right) {\mathrm e}^{2\pi \lambda_j t}$ with $n_j\in \nn_0$, $\lambda_j \in \cc$, and  $\gamma_{j,m} \in \cc$ with $\gamma_{j,n_j} \neq 0$. Then the Fourier coefficients of $y_{j}(t)$ with respect to the Fourier series expansion on $[0,P]$ 
are of the following form:\\
1) For $-{\mathrm i} \lambda_j P \not \in \zz$, we have 
$$
c_k(y_j) = \sum_{\ell=0}^{n_j} \frac{A_{j,\ell}}{(k-C_j)^{\ell+1}}, \qquad k \in {\mathbb Z},
$$
where for $j=1, \ldots, M$, and $\ell=0,\ldots,n_j$, 
\begin{equation}\label{CA} C_j := -{\mathrm i} \lambda_j \, P, \  A_{j,\ell} :=  \frac{\ell!}{(2\pi {\mathrm i})^{\ell+1}} \left( \gamma_{j,\ell} \, P^\ell (1-{\mathrm e}^{2 \pi  \lambda_j P} )- {\mathrm e}^{2 \pi  \lambda_j P} \sum_{m=\ell+1}^{n_j}  P^m \binom{m}{\ell} \, \gamma_{j,m}\right) .
\end{equation}
2) For $-{\mathrm i} \lambda_j P = k_j \in \zz$ we have
\begin{equation} \label{prop-per}
c_k(y_j)=
\begin{cases}
 \sum\limits_{\ell=0}^{n_{j}} \frac{\gamma_{j, \ell}}{\ell+1} P^\ell , & k=k_{j}, \\
 \sum\limits_{\ell=0}^{n_j-1} \frac{A^\ast_{j,\ell}}{(k-C_j)^{\ell+1}}, & k \in {\mathbb Z} \setminus \{k_j\},
\end{cases}
\end{equation}
where for $j=1, \ldots, M$, and $\ell=0,\ldots,n_j-1$,
\begin{equation} \label{ajlstar}
C_j := -{\mathrm i} \lambda_j \, P=k_j, \ \ A^\ast_{j,\ell} :=-\frac{ \ell! }{(2\pi {\mathrm i})^{\ell+1}} \sum\limits_{m=\ell+1}^{n_j}  P^m  \binom{m}{\ell} \, \gamma_{j,m}.
\end{equation}
  \end{corollary}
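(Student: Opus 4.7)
The plan is to derive both formulas directly from Theorem~\ref{thpr3} by exploiting linearity of the Fourier-coefficient functional, expanding $c_k(y_j) = \sum_{m=0}^{n_j} \gamma_{j,m}\, c_k(t^m {\mathrm e}^{2\pi\lambda_j t})$ using the explicit single-term expressions in \eqref{coef-pr2} and \eqref{coef-pr2-per}, and then swapping the order of summation so that each power $(k-C_j)^{-(\ell+1)}$ is collected with the proper coefficient.

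For part~1 I would proceed as follows. With $C_j = -{\mathrm i}\lambda_j P$, Theorem~\ref{thpr3} yields
\begin{equation*}
c_k(\gamma_{j,m} t^m {\mathrm e}^{2\pi\lambda_j t}) = \frac{\gamma_{j,m} P^m m!}{(2\pi{\mathrm i})^{m+1} (k-C_j)^{m+1}} \;-\; \sum_{s=0}^{m} \frac{\gamma_{j,m} P^m m! \, {\mathrm e}^{2\pi\lambda_j P}}{(m-s)! \, (2\pi{\mathrm i})^{s+1} (k-C_j)^{s+1}},
\end{equation*}
obtained by the substitution $s = m-\ell$ in the inner sum of \eqref{coef-pr2}. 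Summing over $m = 0,\ldots, n_j$ and interchanging the two summations in the second term (so that $s$ becomes the outer index running from $0$ to $n_j$ and $m$ runs from $s$ to $n_j$), then using $m!/(m-s)! = s!\binom{m}{s}$, the coefficient of $(k-C_j)^{-(s+1)}$ becomes exactly $A_{j,s}$ after pulling out the $m=s$ term; renaming $s \to \ell$ gives \eqref{CA}.

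For part~2 the argument is essentially the same but based on \eqref{coef-pr2-per}. The value at $k = k_j = -{\mathrm i}\lambda_j P$ is immediate, since each monomial contributes $\gamma_{j,m} P^m/(m+1)$. For $k \neq k_j$ I would substitute \eqref{coef-pr2-per} into $\sum_{m=0}^{n_j} \gamma_{j,m} c_k(t^m {\mathrm e}^{2\pi\lambda_j t})$ and apply the index shift $s = m-\ell$ in the inner sum; note that the inner sum now starts at $\ell = 1$, which translates to $s \le m-1$. Interchanging the order of summation (outer index $s$ from $0$ to $n_j-1$, inner index $m$ from $s+1$ to $n_j$) and again using $m!/(m-s)! = s!\binom{m}{s}$ produces precisely $A^\ast_{j,\ell}$ as in \eqref{ajlstar}.

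The only delicate part is the bookkeeping of the index shift and swap: one must verify the range of $m$ and $s$ carefully (especially the boundary case $\ell=n_j$ in part~1, where the inner sum $\sum_{m=\ell+1}^{n_j}$ is empty, and the fact that in part~2 the $s=n_j$ term vanishes because the single-monomial formula has an empty sum for $m=0$). Since Theorem~\ref{thpr3} provides the one-term identities and the rest is algebraic rearrangement, no further analytic input is needed.
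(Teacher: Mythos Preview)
Your proposal is correct and follows essentially the same route as the paper: apply Theorem~\ref{thpr3} termwise via linearity, sum over $m$, and interchange the order of summation to collect the powers $(k-C_j)^{-(\ell+1)}$. The paper carries out exactly this computation (without naming the substitution $s=m-\ell$ explicitly) for case~1 and then simply remarks that case~2 is analogous using \eqref{coef-pr2-per} instead of \eqref{coef-pr2}.
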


\begin{proof}
Using the formula  (\ref{coef-pr2}) in Theorem \ref{thpr3}, it follows for $-{\mathrm i} \lambda_j P \not \in \zz$  that 
\begin{align*}
c_k(y_j) & = \sum_{m=0}^{n_j} \frac{\gamma_{j,m} P^m \, m!}{(2 \pi {\mathrm i})^{m+1} (k+ {\mathrm i}\lambda_j P)^{m+1}} \left( 1 - {\mathrm e}^{2\pi \lambda_j P} \sum_{\ell=0}^m \frac{1}{\ell!} (2\pi {\mathrm i})^\ell (k +{\mathrm i} \lambda_j P)^{\ell} \right) \\
& = \sum_{\ell=0}^{n_j} \frac{\gamma_{j,\ell} P^\ell \, \ell!}{(2 \pi {\mathrm i})^{\ell+1} (k+ {\mathrm i}\lambda_j P)^{\ell+1}} -{\mathrm e}^{2 \pi  \lambda_j P} \sum_{m=0}^{n_j} \sum_{\ell =0}^m \frac{\gamma_{j,m} P^m m!}{\ell! \, (2 \pi {\mathrm i})^{m+1-\ell} \, (k+{\mathrm i}\lambda_j P)^{m+1-\ell}}\\
& = \sum_{\ell=0}^{n_j} \frac{\ell!}{(2\pi {\mathrm i})^{\ell+1} (k+ {\mathrm i}\lambda_j P)^{\ell+1}}
\left( \gamma_{j,\ell} P^\ell - {\mathrm e}^{2 \pi \lambda_j P} \sum_{m=\ell}^{n_j} \gamma_{j,m} \, P^m \, \binom{m}{\ell} \right).
\end{align*}
For the case $-{\mathrm i} \lambda_j P = k_j \in \zz$ the proof is similar to the one above where we use (\ref{coef-pr2-per}) instead of (\ref{coef-pr2}).
\end{proof}

\subsection{Recovery of Extended Exponential Sums with Non-$P$-periodic Terms} 

We consider first the simpler case where $y(t)$  possesses only non-P-periodic terms.

\begin{lemma}\label{lemma5.3}
Assume that $y(t) = \sum\limits_{j=1}^M \Big( \sum\limits_{m=0}^{n_{j}} \gamma_{j,m} t^{m}\Big) {\mathrm e}^{2\pi \lambda_{j}t}$ in $(\ref{func-def-5})$ possesses only non-P-periodic terms, i.e.,  $-{\mathrm i} \lambda_j P \not \in \zz$ for $j=1,\ldots ,M$. 
Further, let $r_{N}(z)$ be a rational function of the form 
\begin{equation}\label{rat1} 
r_N(z) := \frac{p_{N-1}(z)}{q_N(z)} = \sum_{j=1}^M \sum_{\ell=0}^{n_j} \frac{A_{j,\ell}}{(z-C_j)^{\ell+1}}, 
\end{equation}
where $C_{j}$ and $A_{j,\ell}$, $j=1, \ldots , M$, $\ell=0, \ldots , n_{j}$, are given as in $(\ref{CA})$. Then we have $r_{N}(k) = c_{k}(y)$ for $k \in {\mathbb Z}$.
Moreover, the parameters $\lambda_{j}$ and $\gamma_{j,m}$, $j=1, \ldots , M$, $m=0, \ldots , n_{j}$, of $y(t)$ are given by 
\begin{equation}\label{beta1}
 \lambda_j = \frac{{\mathrm i}C_j}{P},  
\end{equation}
and recursively by 
\begin{equation}\label{bs}
\gamma_{j,m}=\frac{(1-{\mathrm e}^{2\pi {\mathrm i} C_j})^{-1}}{P^{m} } \left(
\frac{(2\pi {\mathrm i} )^{m+1}}{m!} A_{j,m}+  {\mathrm e}^{2\pi {\mathrm i} C_j} \sum\limits_{\ell=m+1}^{n_j}   \binom{\ell}{m} P^{\ell} \gamma_{j,\ell} \right)
\quad m=n_{j}, n_{j}-1
, \ldots ,0.
\end{equation}
\end{lemma}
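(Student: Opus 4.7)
The plan is to split the lemma into two essentially independent claims: the identity $r_N(k)=c_k(y)$ for all $k\in\mathbb{Z}$, and the inversion formulas (\ref{beta1})--(\ref{bs}) recovering the original parameters from $C_j$ and $A_{j,\ell}$.

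For the first claim I would invoke linearity of the Fourier coefficients: writing $y=\sum_{j=1}^{M} y_{j}$ with $y_{j}(t)=\bigl(\sum_{m=0}^{n_{j}}\gamma_{j,m}t^{m}\bigr){\mathrm e}^{2\pi\lambda_{j}t}$, we have $c_{k}(y)=\sum_{j=1}^{M}c_{k}(y_{j})$. Since by assumption $-{\mathrm i}\lambda_{j}P\notin\mathbb{Z}$, Corollary \ref{cor1}, part 1), applies to every $y_{j}$ and gives exactly $c_{k}(y_{j})=\sum_{\ell=0}^{n_{j}}A_{j,\ell}/(k-C_{j})^{\ell+1}$ with the constants defined in (\ref{CA}). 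Summing over $j$ yields $c_{k}(y)=r_{N}(k)$. Before moving on I would observe that the pairwise distinctness of the $\lambda_{j}$ implies the pairwise distinctness of the $C_{j}$, so the partial-fraction representation (\ref{rat1}) is of type $(N-1,N)$ with $N=\sum_{j}(1+n_{j})$.

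For the second claim, $\lambda_{j}={\mathrm i}C_{j}/P$ is immediate from the definition $C_{j}=-{\mathrm i}\lambda_{j}P$, which also gives ${\mathrm e}^{2\pi\lambda_{j}P}={\mathrm e}^{2\pi{\mathrm i}C_{j}}$. To extract the $\gamma_{j,m}$ I would simply solve (\ref{CA}) for $\gamma_{j,m}$: multiplying by $(2\pi{\mathrm i})^{m+1}/m!$ and isolating the term that contains $\gamma_{j,m}$ itself yields
\[
\gamma_{j,m}\,P^{m}\bigl(1-{\mathrm e}^{2\pi{\mathrm i}C_{j}}\bigr) = \frac{(2\pi{\mathrm i})^{m+1}}{m!}\,A_{j,m} + {\mathrm e}^{2\pi{\mathrm i}C_{j}}\sum_{\ell=m+1}^{n_{j}}\binom{\ell}{m}P^{\ell}\gamma_{j,\ell},
\]
which, after dividing by $P^{m}(1-{\mathrm e}^{2\pi{\mathrm i}C_{j}})$, is exactly (\ref{bs}). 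The division is legitimate precisely because $-{\mathrm i}\lambda_{j}\notin\frac{1}{P}\mathbb{Z}$, i.e.\ $C_{j}\notin\mathbb{Z}$, so $1-{\mathrm e}^{2\pi{\mathrm i}C_{j}}\neq 0$. The recursion is then performed top-down: for $m=n_{j}$ the sum is empty and we obtain $\gamma_{j,n_{j}}$ directly from $A_{j,n_{j}}$; for $m<n_{j}$ the right-hand side only involves $\gamma_{j,\ell}$ with $\ell>m$, which have already been computed.

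The proof is essentially an algebraic bookkeeping exercise; I do not expect any real obstacle. The only point that deserves a brief remark is that (\ref{CA}) should be read as an invertible triangular system for the vector $(\gamma_{j,n_{j}},\ldots,\gamma_{j,0})^{T}$ in terms of $(A_{j,n_{j}},\ldots,A_{j,0})^{T}$, with the diagonal entries $P^{m}(1-{\mathrm e}^{2\pi{\mathrm i}C_{j}})/(\text{const})$ being nonzero under the standing non-$P$-periodicity assumption, which guarantees both existence and uniqueness of the solution produced by (\ref{bs}).
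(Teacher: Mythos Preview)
Your proposal is correct and follows essentially the same route as the paper: invoke Corollary~\ref{cor1} and linearity to get $c_k(y)=r_N(k)$, read off $\lambda_j={\mathrm i}C_j/P$, and then invert the triangular relation (\ref{CA}) by backward substitution to obtain (\ref{bs}). The paper records the triangular system in matrix form (equation (\ref{gamma2})) before inverting, whereas you do the algebra directly, but the argument is the same.
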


\begin{proof}
Using Corollary \ref{cor1}, we obtain 
\begin{equation}\label{coef-ext-2} 
c_k(y) = \sum_{j=1}^M c_k(y_j) = \sum_{j=1}^M \sum_{\ell=0}^{n_j} \frac{A_{j,\ell}}{(k-C_j)^{\ell+1}}
\end{equation}
for all $k \in {\mathbb Z}$.
Thus, the Fourier coefficients of $y(t)$ in (\ref{func-def-5}) can be represented by $r_N(z)$ in (\ref{rat1}) such that $r_N(k) = c_k(y)$ for $k \in {\mathbb Z}$.
Note that the polynomials $q_N(z)$ and $p_{N-1}(z)$ determining $r_{N}(z)$ are coprime.  
In particular, $y(t)$ is completely determined by $r_N(z)$.
From (\ref{CA}) we obtain 
 $\lambda_j = \frac{{\mathrm i}C_j}{P}$ for $j=1, \ldots , M$,  
and, taking the vectors ${\mathbf A}_j := (A_{j,0}, \ldots , A_{j,n_j})^T$ and $\gamra_j := (\gamma_{j,0}, \ldots , \gamma_{j,n_j})^T$ for $j=1, \ldots , M$, we conclude from (\ref{CA}) for $C_j = -{\mathrm i}P \lambda_j \not\in {\mathbb Z}$
\begin{equation} \label{gamma2}
\resizebox{\textwidth}{!}{$ {\mathbf A}_j \!= \! \mathrm{diag}\left(\!\! \frac{-{\mathrm e}^{2 \pi {\mathrm i}C_j} \, \ell!}{(2\pi {\mathrm i})^{\ell+1}} \right)_{\ell=0}^{n_j} \left(\!\! \begin{array}{ccccc}
\!(1-{\mathrm e}^{-2 \pi {\mathrm i}C_j}) & \binom{1}{0}P & \binom{2}{0}P^2 & \ldots  & \binom{n_j}{0}P^{n_j}\\[1ex]
0 & \!\!\!(1-{\mathrm e}^{-2 \pi {\mathrm i}C_j})P & \binom{2}{1} P^2 & \ldots & \binom{n_j}{1}P^{n_j} \\
\vdots &   & &  & \vdots \\
\vdots &  & & \ddots & \binom{n_j}{n_j-1} P^{n_j}\\[1ex]
0 & 0 &\ldots & 0 & \!\!\!(1-{\mathrm e}^{-2 \pi {\mathrm i}C_j}) P^{n_j} \end{array} \!\!\!\right) 
\gamra_j. 
$}
\end{equation}
The assumption $C_j = -{\mathrm i} P \lambda_j \not\in {\mathbb Z}$ yields invertibility of the upper triangular matrix in the formula above, and moreover, we can compute $\gamra_j$ recursively by  backward substitution to get (\ref{bs}). 
\end{proof}

Therefore, it suffices to determine the parameters of $r_N(z)$ to reconstruct $y(t)$. We obtain the following generalization of Theorem \ref{theo2}.

\begin{theorem}\label{theo4}  Let $y$ be of the form $(\ref{func-def-5})$ with $N = \sum_{j=1}^M (1+n_j) \in {\mathbb N}$, pairwise distinct $ \lambda_{j} \in {\mathbb C} \setminus \frac{{\mathrm i}}{P} {\mathbb Z}$ and  $\gamma_{j,m} \in {\mathbb C}$ for $j=1, \ldots , M$, $m=0, \ldots , n_j$, with  $\gamma_{j,n_j} \neq 0$.
Let $\{ c_{k}(y): \, k \in \Gamma\}$ with $\Gamma \subset {\mathbb Z}$ be a set of $L \ge 2N+1$ Fourier coefficients of the Fourier expansion of $y$ on the finite interval $[0,P] \subset {\mathbb R}$ with $P>0$.
Then $y$ is uniquely determined by $2N$ of these Fourier
coefficients and Algorithm $\ref{alg1}$ $($with ${\mathbf \Gamma} = (k)_{k \in \Gamma}$ and ${\mathbf f} := (c_{k}(y))_{k \in \Gamma})$  terminates after $N$ steps taking $N + 1$ interpolation points and provides a rational function $r_{N}(z)$ that satisfies  ${c}_{k}(y)= r_{N}(k)$ for all $k \in {\mathbb Z}$.
\end{theorem}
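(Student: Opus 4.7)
My plan is to generalize the argument from the proof of Theorem \ref{theo2} to the extended case, exploiting the rational structure of the Fourier coefficients supplied by Lemma \ref{lemma5.3}.

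First, Lemma \ref{lemma5.3} gives $c_k(y) = r_N(k)$ for all $k \in \mathbb{Z}$, where $r_N$ is the rational function (\ref{rat1}) of type $(N-1,N)$ having $M$ distinct poles $C_j = -\mathrm{i}\lambda_j P \notin \mathbb{Z}$ of multiplicity $n_j+1$. From (\ref{CA}) one reads off
\[
A_{j,n_j} = \frac{n_j!}{(2\pi\mathrm{i})^{n_j+1}}\,\gamma_{j,n_j}\,P^{n_j}\,(1-\mathrm{e}^{2\pi\lambda_j P}),
\]
which is nonzero because $\gamma_{j,n_j}\neq 0$ and $\lambda_j\notin \frac{\mathrm{i}}{P}\mathbb{Z}$. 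Hence each pole of $r_N$ has exact multiplicity $n_j+1$, the type $(N-1,N)$ is sharp (numerator and denominator are coprime of exact degrees $\le N-1$ and $N$), and $r_N$ is determined by any $2N$ interpolation conditions at non-pole points; $y$ is recovered from $r_N$ through (\ref{beta1})--(\ref{bs}).

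The main work is the iteration claim. At step $J=N$, Algorithm \ref{alg1} has chosen $S_{N+1}\subset\Gamma$ of cardinality $N+1$ and built the Loewner matrix
\[
\mathbf{A}_{N+1} = \left(\frac{c_n(y)-c_k(y)}{n-k}\right)_{n\in\Gamma\setminus S_{N+1},\,k\in S_{N+1}},
\]
and I need to verify that $\rank \mathbf{A}_{N+1} = N$. Using the elementary identity
\[
\frac{1}{(n-C)^{\ell+1}} - \frac{1}{(k-C)^{\ell+1}} = (k-n)\sum_{s=0}^{\ell}\frac{1}{(n-C)^{s+1}(k-C)^{\ell-s+1}}
\]
together with (\ref{rat1}), the re-indexing $(j,\ell,s) \mapsto (j,s,t:=\ell-s)$ produces the factorization
\[
\mathbf{A}_{N+1} = -\,\mathbf{U}\,T\,\mathbf{V}^{T},
\]
where $\mathbf{U}=\bigl(\tfrac{1}{(n-C_j)^{s+1}}\bigr)_{n\in\Gamma\setminus S_{N+1},\,(j,s)}$ and $\mathbf{V}=\bigl(\tfrac{1}{(k-C_j)^{t+1}}\bigr)_{k\in S_{N+1},\,(j,t)}$ are confluent Cauchy matrices with exactly $N$ columns indexed by $j=1,\ldots,M$ and $s,t=0,\ldots,n_j$; and $T=\mathrm{blkdiag}(T_1,\ldots,T_M)$ with $T_j = \bigl(A_{j,s+t}\,\mathbf{1}_{s+t\le n_j}\bigr)_{s,t=0}^{n_j}$ is an anti-triangular Hankel block with constant antidiagonal $A_{j,n_j}$, whence $\det T_j = \pm A_{j,n_j}^{n_j+1}\neq 0$. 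Because the rational functions $z\mapsto 1/(z-C_j)^{s+1}$ are linearly independent and the integer sample points of $\Gamma$ avoid the non-integer poles $C_j$, both $\mathbf{U}$ and $\mathbf{V}$ have full column rank $N$. Consequently $\rank\mathbf{A}_{N+1} = N$ and $\ker\mathbf{A}_{N+1}$ is one-dimensional.

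To conclude, I would argue exactly as in Theorem \ref{theo2}. Since the integer nodes in $S_{N+1}$ are disjoint from the non-integer poles of $r_N$, there is a barycentric weight vector $\widetilde{\mathbf{w}}$ supported on $S_{N+1}$ which realises $r_N$ through (\ref{bar-form-1}). As $r_N$ interpolates $c_n(y)$ everywhere in $\mathbb{Z}$, we have $\mathbf{A}_{N+1}\widetilde{\mathbf{w}} = \mathbf{0}$; and because the representation has true type $(N-1,N)$ rather than $(N,N)$, the same $\widetilde{\mathbf{w}}$ satisfies the second side condition $\widetilde{\mathbf{w}}^{T}\mathbf{f}_{S_{N+1}} = 0$ of (\ref{condw}). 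Thus $\widetilde{\mathbf{w}}$ spans the one-dimensional kernel; the normalized right singular vector $\mathbf{v}_1$ of $\mathbf{A}_{N+1}$ belonging to the zero singular value is a scalar multiple of $\widetilde{\mathbf{w}}$, both side conditions are met automatically, Algorithm \ref{alg1} selects $\mathbf{w} = \mathbf{v}_1$ at iteration $N$, and the produced barycentric rational function coincides with $r_N$, so the error becomes zero and the algorithm terminates.

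The principal obstacle is the bookkeeping in the factorization $\mathbf{A}_{N+1} = -\mathbf{U}\,T\,\mathbf{V}^{T}$: one must carefully re-sum the triple index $(j,\ell,s)$ into precisely $N$ confluent Cauchy columns and identify the middle block as an anti-triangular Hankel whose non-singularity is governed by the single leading coefficient $A_{j,n_j}\neq 0$. Full column rank of the confluent Cauchy factors is a mild extension of the classical Cauchy-determinant formula and will warrant a short self-contained verification.
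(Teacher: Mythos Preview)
Your proposal is correct and follows the same route as the paper, which merely says ``using similar arguments as in the proof of Theorem \ref{theo2}'' without spelling out the factorization. You actually carry out what the paper only gestures at: the confluent--Cauchy factorization $\mathbf{A}_{N+1}=-\mathbf{U}\,T\,\mathbf{V}^{T}$ with the anti-triangular Hankel middle factor $T_j$ is exactly the right generalization of the diagonal $\mathrm{diag}(-A_\ell)$ appearing in Theorem \ref{theo2}, and your observation that $A_{j,n_j}\neq 0$ forces $\det T_j\neq 0$ is the key point that makes the rank argument go through in the multiple-pole setting. The only place to be slightly more explicit is the ``all components of $\mathbf{v}_1$ are nonzero'' step (needed so that every interpolation condition is actually enforced): this follows because any $N$ of the $N+1$ columns of $\mathbf{V}^{T}$ are independent, which in turn is the statement that a nontrivial combination $\sum_{j,t} b_{j,t}/(z-C_j)^{t+1}$ has numerator of degree $\le N-1$ and hence cannot vanish at $N$ distinct integers---exactly the confluent-Cauchy verification you flag at the end.
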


\begin{proof} 
As shown in Lemma \ref{lemma5.3}, the Fourier coefficients of $y(t)$ in (\ref{func-def-5}) admit a representation of the form (\ref{coef-ext-2}), i.e., there exists a rational function $r_N(z)$ of type $(N-1,N)$ such that we have $r_N(k)=c_k(y)$. This rational function is already determined by $2N$ interpolation conditions. 
To show that Algorithm \ref{alg1} provides this rational function (in barycentric form) after $N$ iteration steps, we again have to inspect the generalized Cauchy matrix ${\mathbf A}_{N+1} = \left( \frac{c_n(y)- c_k(y)}{n-k} \right)_{n \in \Gamma \setminus S, k \in S}$, where $S \subset \Gamma$ denotes the index set for interpolation. Using similar arguments as in the proof of Theorem \ref{theo2}, we can show that ${\mathbf A}_{N+1}$ possesses exactly rank $N$ and that the vector ${\mathbf w} \in {\mathbb C}^{N+1}$ satisfying ${\mathbf A}_{N+1}\, {\mathbf w} = {\mathbf 0}$ is the weight vector determining the rational function $r_N(z)$ in the barycentric form (\ref{bar-form-1}).
\end{proof}

To reconstruct $y(t)$ in (\ref{func-def-5}) we now can proceed as follows.

\noindent
\textbf{Step 1.} First, 
we apply Algorithm \ref{alg1} with  $P>0$, the period for the computation of Fourier coefficients,  $\mathbf {\Gamma} \in \zz^{L}$, the vector of indices of given Fourier coefficients, and
$\mathbf{f}=\mathbf{c}  = (c_k(y))_{k \in \Gamma} \in \cc^{L}$, the vector of given  Fourier coefficients.
After $N$ iteration steps, we obtain $r_N(z)$ in barycentric form (\ref{bar-form-1}) that is determined by the vector of interpolation indices ${\mathbf S} =(k_1,\ldots,k_{N+1})^T$, the vector of corresponding Fourier coefficients ${\mathbf f}_{\mathbf S}= (c_{k_\ell}(y))_{\ell=1}^{N+1}$,  and the weight vector $\mathbf{w}=\left(w_j \right)_{j=1}^{N+1}$.

\noindent
\textbf{Step 2.} The rational function $r_N(z)$ can be rewritten as a partial fraction decomposition 
$$
r_N(z)=\sum\limits_{j=1}^M \sum\limits_{\ell=0}^{n_j}    \frac{A_{j,\ell}}{(z-C_j)^{\ell+1}}.
$$
To recover the parameters $C_j$ and $A_{j,\ell}$, we need to apply a modified version of Algorithm \ref{alg2}. As before,
  the parameters $C_j$ are the poles of $r_{N}(z)$, i.e., the zeros of $\tilde{q}_{N}(z)$. 
This time, the zeros $C_j$ may come with multiplicity $n_j+1 \ge 1$. 
Afterwards, the parameters $A_{j,\ell}$ are computed using the interpolation conditions $c_{k_\ell}(y) = r_N(k_\ell)$, $\ell=1, \ldots, N+1$. 

We summarize the modified Algorithm \ref{alg4}.
\begin{algorithm} [Reconstruction  of a partial fraction representation]
\label{alg4}
\textbf{Input: } $\mathbf{S} \in \zz^{N+1}$,
  $\mathbf{c}_{\mathbf{S}} \in \cc^{N+1}$,
   $\mathbf{w} \in \cc^{N+1}$ the output vectors of Algorithm \ref{alg1}.

\begin{itemize}
\item Build the matrices in (\ref{eig}) with ${\mathbf S} = (z_j)_{j=1}^{N+1} = (k_j)_{j=1}^{N+1}$ and ${\mathbf w} = (w_{j})_{j=1}^{N+1}$ and solve this eigenvalue problem to find the parameter vector $\mathbf{C} :=\boldsymbol{ \rho}=(C_1, \ldots,C_N)^{T}$ of finite eigenvalues. Extract the number $M$ of different poles $\rho_{j}=C_j$ and the  corresponding multiplicities $n_1,\ldots,n_M$.
\item Solve the linear system with ${\mathbf c}_{\mathbf S} = (c_{k_{\nu}})_{\nu=1}^{N+1}$, 
$$\sum\limits_{j=1}^M \sum\limits_{\ell=0}^{n_j}    \frac{A_{j,\ell}}{(k_\nu-C_j)^{\ell+1}}=c_{k_\nu}(y), \qquad  \nu=1,\ldots,N+1. $$
\end{itemize}
\textbf{Output: } $M$ the number of different poles $C_j$, \\
\phantom{Output:} parameter vectors  $(C_j)_{j=1}^{M}$, $\left(n_j \right)_{j=1}^{M}$, and $(A_{j,\ell})_{j=1,\ldots,M, \, \ell=0,\ldots,n_j}$.

\end{algorithm}

\noindent
\textbf{Step 3.} Finally, we extract the wanted parameters $\lambda_j$, $\gamma_{j,m}$ via (\ref{beta1}) and (\ref{gamma2}).

\begin{example}
We consider the extended exponential sum 
\begin{align}
y_3(t):= &  \left(3.1+0.5 \, \mathrm{i} +0.5t-0.002t^{2}+1.6 t^{3}+(0.55-4.23  \, \mathrm{i}) t^{4} \right) {\mathrm e}^{2\pi  \,( -0.1236+2.2371 \mathrm{i} )t} \notag \\
&-15.02{\mathrm e}^{2\pi  \, ( 0.011-\sqrt{2.2} \mathrm{i})t},  \label{y3}
\end{align}
see Figure \ref{fig3}, i.e, $y_3$ is of the form  (\ref{func-def-5}) with $N=6$, and with parameters
$$
{\mathbf n}=(n_1, \, n_2) = (4,0), \qquad  \lamdra=(\lambda_1, \lambda_2) = (-0.1236+2.2371 \mathrm{i} , 0.011-\sqrt{2.2} \mathrm{i}),
$$ 
and $\gamra=(\gamra_{1}^{T}, \gamra_{2}^{T})$ with 
$$
\gamra_1=(\gamma_{1,0}, \ldots , \gamma_{1,4}) = (3.1+0.5 \, \mathrm{i} , 0.5,-0.002,  1.6, 0.55-4.23  \, \mathrm{i}), \quad   \gamra_2= \gamma_{2,0}= -15.02.
$$

\begin{figure}[h]
\centering
\begin{subfigure}{0.46\textwidth}
\includegraphics[width=\textwidth]{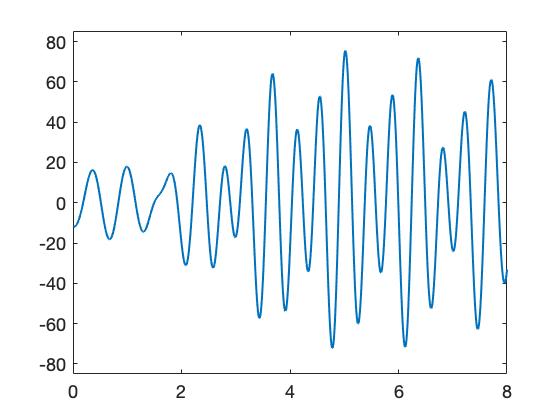}   
\end{subfigure}
\begin{subfigure}{0.46\textwidth}
\includegraphics[width=\textwidth]{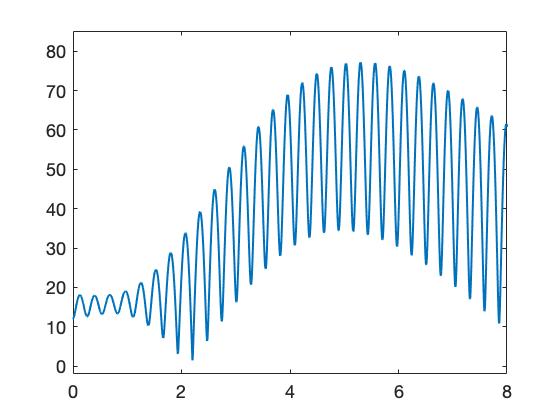}   
\end{subfigure}
\caption{Graph of  $\mathrm{Re} \, y_3$ (left) and of $|y_3|$ (right) for $y_3(t)$ in (\ref{y3}) on 
$[0,8]$.}
\label{fig3}
\end{figure}

We take $P=8$ and use 59 Fourier coefficients $c_k(y_3)$, $k=-29,\ldots,29$, for the recovery of $y_3$. Algorithm \ref{alg1} iteratively uses the values  $c_{18}(y_3)$, $c_{-12}(y_3)$, $c_{17}(y_3)$, $c_{-8}(y_3)$, $c_{19}(y_3)$, $c_{15}(y_3)$ and $c_{21}(y_3)$ for interpolation (in this order) before it terminates with the error ${1.01 \cdot 10^{-14}}$ after 6 iteration steps. We get a rational function $r_6(z)$ of type $(5,6)$ in barycentric form (\ref{bar-form-1})  determined by
$$
 {\mathbf  S}=\left(
\begin{matrix}
    18\\
   -12\\
    17\\
    -8\\
    19\\
    15\\
    21
\end{matrix}
\right), \, \, \, \, \, \, \, \, \, 
 {\mathbf  w}=\left(
\begin{matrix}
  0.002282995536669 + 0.003576768606091\mathrm{i}\\
 -0.002637047964009 + 0.040954493823534\mathrm{i}\\
  0.010814031928894 - 0.000636557543413\mathrm{i}\\
 -0.633666296891693 + 0.762517726922522\mathrm{i}\\
  0.000499981716439 + 0.016884004429586\mathrm{i}\\
  0.070370845980150 + 0.043823205664822\mathrm{i}\\
  0.078294211622157 + 0.043889969381308\mathrm{i}
\end{matrix}
\right),
$$
and the corresponding vector of Fourier coefficients ${\mathbf c}_{\mathbf S}$. We reconstruct the parameter vectors $\tilde{\mathbf n}$, $\tilde{\lamdra}$ and $\tilde{\gamra}$ using Algorithms \ref{alg4} and via (\ref{beta1}) and (\ref{gamma2}).  Algorithm \ref{alg4} yields the poles $C_j=- 8{\mathrm i}\lambda_j$, $j=1,\ldots,7$, of the rational function $r_6(z)$,
$$
 {\mathbf  C}=\left(
\begin{matrix}
-11.865917579353065 - 0.088000000000003\mathrm{i}\\
 17.895308407373371 + 0.984361807596397\mathrm{i}\\
 17.892106783572142 + 0.988839706980242\mathrm{i}\\
 17.900555173098393 + 0.985997347696078\mathrm{i}\\
 17.895392315518535 + 0.993288509746222\mathrm{i}\\
 17.900637320434701 + 0.991512627981242\mathrm{i}
\end{matrix}
\right).
$$

We assume that two computed poles $C_{j_1}$ and $C_{j_2}$, $j_1\neq j_2$, are equal, if  $|C_{j_1}-C_{j_2}| < 0.01$. 
 We  obtain the pole $C_1=  17.896799999999427 + 0.988800000000036\mathrm{i}$  with  multiplicity 5 and the pole $C_2=-11.865917579353065 - 0.088000000000003\mathrm{i}$ with multiplicity 1. Note that $C_1$ is taken as the average of the last $5$ values in ${\mathbf C}$, i.e., ${\mathbf n} = (4,0)$.
The reconstruction procedure provides the errors
$$
\|\tilde{\lamdra} - \lamdra\|_\infty=7.16 \cdot  10^{-14}, \qquad  \|\tilde{\gamra} - \gamra\|_\infty=2.34 \cdot 10^{-10},
$$
where $\tilde{\lamdra}$ and $\tilde{\gamra}$ denote  the computed parameter vectors. 
\end{example}

\subsection{Recovery of Extended Exponential Sums Containing also $P$-periodic Terms} 
\label{sec:52}

Now let us assume that the function (\ref{func-def-5}) contains also $P$-periodic components. 
In this case it can be represented as $y=y^{(1)}+y^{(2)}$, where
\begin{equation}\label{func-def-y1}
y^{(1)}(t):=\sum_{j=1}^{M_1} \left( \sum_{m=0}^{n_j}  \, \gamma_{j,m} \, t^{m} \right) e^{2 \pi  \lambda_jt}, \ \  -{\mathrm i} \lambda_j P \not \in \zz,
\end{equation}
and
\begin{equation}\label{func-def-y2}
y^{(2)}(t):=\sum_{j=M_1+1}^{M} \left( \sum_{m=0}^{n_j}  \, \gamma_{j,m} \, t^{m} \right) e^{2 \pi  \lambda_jt}, \ \  -{\mathrm i} \lambda_j P  \in \zz.
\end{equation}
Let $M_{1}$ and $N_1:=\sum\limits_{j=1}^{M_1}(1+n_j)$ be the length and the order of $y^{(1)}$ and let $M_2:=M-M_1$ and $N_2:=\sum\limits_{j=M_1+1}^{M}(1+n_j)=N-N_1$ denote the length and the order of $y^{(2)}(t)$. We define the set $\Sigma:=\{-{\mathrm i} \lambda_j P: \, j=M_{1}+1, \ldots , M\}$ corresponding to the $M_{2}$ periodic terms in $y^{(2)}$.

\begin{lemma}\label{lemma5.4}
Let $y= y^{(1)}+ y^{(2)}$ with $y^{(1)}$ in  $(\ref{func-def-y1})$ and $y^{(2)}$ in  $(\ref{func-def-y2})$.
Define
\begin{equation}\label{rat-per} 
r^{\dagger}_{N-M_{2}}(z) := r_{N_{1}}(z) + r^{\ast}_{N_2-M_2}(z) = \sum_{j=1}^{M_{1}} \sum_{\ell=0}^{n_j} \frac{A_{j,\ell}}{(z-C_j)^{\ell+1}} + \sum_{j=M_{1}+1}^M \sum_{\ell=0}^{n_j-1} \frac{A^{\ast}_{j,\ell}}{(z-C_j)^{\ell+1}}, 
\end{equation}
where $C_{j}$ and $A_{j,\ell}$, $j=1, \ldots , M_{1}$, $\ell=0, \ldots , n_{j}$,  are given as in $(\ref{CA})$, and $C_{j}$ and $A^{\ast}_{j,\ell}$, $j={M_{1}+1}, \ldots , M$, $\ell=0, \ldots , n_{j}-1$, as in $(\ref{ajlstar})$ (where the range for $j$ has to be adjusted).
Then we have 
$$ c_k(y)=r^{\dagger}_{N-M_{2}}(k) \quad \textrm{ for}  \qquad k\in {\mathbb Z}\setminus \Sigma, $$ 
and in particular, $c_k(y^{(1)}) = r_{N_{1}}(k)$, $k\in {\mathbb Z}$.
 Moreover, the parameters $\lambda_{j}$, $j=1, \ldots , M$, are determined by
$(\ref{beta1})$. Further, $\gamma_{j,m}$, $j=1, \ldots , M_{1}$, $m=0, \ldots , n_{j}$, of $y^{(1)}$ 
are given as in  $(\ref{bs})$, and  $\gamma_{j,m}$, $j=M_{1}+1, \ldots , M$, $m=1, \ldots , n_{j}$, of $y^{(2)}$ are 
 recursively given by 
\begin{equation}\label{bs1}
\gamma_{j,m+1}= - \frac{1}{P^{m+1} (m+1)} \left( \frac{(2\pi {\mathrm i} )^{m+1} A_{j,m}^{\ast}}{m!} +  \sum\limits_{\ell=m+2}^{n_j}   \binom{\ell}{m} P^{\ell} \, \gamma_{j,\ell} \right), \quad m=n_{j}-1, 
, \ldots ,0.
\end{equation}
The parameters $\gamma_{j,0}$, $j=M_{1}+1, \ldots , M$, are determined with $k_{j}:= - {\mathrm i} \lambda_{j} P$ by
$$
\gamma_{j,0} = c_{k_j}(y^{(2)})-\sum\limits_{\substack{ j_1=M_1+1 \\ j_1 \neq j}}^{M}  \sum\limits_{\ell=0}^{n_{j_1}-1} \frac{A^\ast_{j_1,\ell}}{(k_{j_1}-C_j)^{\ell+1}} - \sum_{\ell=1}^{n_{j}} \frac{P^{\ell}}{\ell+1} \gamma_{j,\ell}.
$$
\end{lemma}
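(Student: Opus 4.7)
My plan is to reduce the statement to a direct term-by-term application of Corollary \ref{cor1}, using the split $y = y^{(1)}+y^{(2)}$ already given. For each summand of $y^{(1)}$ the frequency satisfies $-{\mathrm i}\lambda_j P \notin \zz$, so part (1) of Corollary \ref{cor1} gives the rational contribution $\sum_{\ell=0}^{n_j} A_{j,\ell}/(k-C_j)^{\ell+1}$ for \emph{all} $k\in\zz$; summing over $j=1,\dots,M_1$ yields $c_k(y^{(1)})=r_{N_1}(k)$ for every $k$, which is exactly assertion (2) and is just Lemma~\ref{lemma5.3}. For each summand of $y^{(2)}$, part (2) of Corollary \ref{cor1} gives the rational contribution $\sum_{\ell=0}^{n_j-1} A^{\ast}_{j,\ell}/(k-C_j)^{\ell+1}$ \emph{except} at the single index $k_j=-{\mathrm i}\lambda_j P$, where the value is instead the constant $\sum_{\ell=0}^{n_j}\gamma_{j,\ell}P^\ell/(\ell+1)$. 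Because $\Sigma=\{k_{M_1+1},\dots,k_M\}$ collects precisely these exceptional indices, restricting to $k\in\zz\setminus\Sigma$ lets every periodic summand be evaluated through its rational formula, and adding the two rational parts gives $c_k(y)=r^{\dagger}_{N-M_2}(k)$, proving (1).

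For parameter recovery, the identity $\lambda_j={\mathrm i}C_j/P$ is immediate from the definitions of $C_j$ in both parts of Corollary \ref{cor1}. The coefficients $\gamma_{j,m}$ for $j\le M_1$ are given by the back-substitution (\ref{bs}) of Lemma \ref{lemma5.3}. For $j>M_1$ and $m\ge 1$, the key observation is that the formula (\ref{ajlstar}) for $A^{\ast}_{j,\ell}$ does not involve $\gamma_{j,0}$: it is a linear functional of the upper-tail coefficients $\gamma_{j,\ell+1},\dots,\gamma_{j,n_j}$. Reading $(A^{\ast}_{j,0},\dots,A^{\ast}_{j,n_j-1})$ as an upper-triangular system in $(\gamma_{j,1},\dots,\gamma_{j,n_j})$, the $\ell$-th diagonal entry (the coefficient of $\gamma_{j,\ell+1}$) equals $-\ell!\,P^{\ell+1}(\ell+1)/(2\pi{\mathrm i})^{\ell+1}\neq 0$, so back-substitution starting at $\ell=n_j-1$ produces the closed form (\ref{bs1}). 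Finally, $\gamma_{j,0}$ is extracted from the only unused datum at the exceptional index: from $c_{k_j}(y^{(2)})$, which by assertion (2) equals $c_{k_j}(y)-r_{N_1}(k_j)$, one splits the sum over $j_1$ into $j_1=j$ (contributing the constant value $\sum_{\ell=0}^{n_j}\gamma_{j,\ell}P^\ell/(\ell+1)$) and $j_1\ne j$ (each contributing a rational value at $k=k_j$, since $k_{j_1}\ne k_j$ by pairwise distinctness of the $\lambda_j$). Solving this scalar equation for $\gamma_{j,0}$ — with $\gamma_{j,1},\dots,\gamma_{j,n_j}$ already in hand — gives the stated formula.

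The main conceptual obstacle is keeping the bookkeeping clean at the exceptional set $\Sigma$: the rational representation $r^{\dagger}_{N-M_2}$ is deliberately missing the residues of degree $(\cdot)/(z-C_j)^{n_j+1}$ for $j>M_1$ (since $A^{\ast}_{j,\ell}$ stops at $\ell=n_j-1$), and therefore it carries \emph{no} information about $\gamma_{j,0}$ for $j>M_1$. Recognising that this missing information is precisely what the gap $c_{k_j}(y)-r^{\dagger}_{N-M_2}(k_j)$ encodes, and that $r_{N_1}(k_j)$ remains a valid evaluation there (because $r_{N_1}$ has no pole at $k_j$), is what makes the final recovery step work; everything else is routine linear algebra on explicit triangular systems.
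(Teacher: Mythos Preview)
Your proposal is correct and follows essentially the same route as the paper: both apply Corollary~\ref{cor1} term by term to obtain $c_k(y^{(1)})=r_{N_1}(k)$ for all $k$ and $c_k(y^{(2)})=r^{\ast}_{N_2-M_2}(k)$ for $k\notin\Sigma$, then recover $\lambda_j$ from $C_j$, the non-periodic $\gamma_{j,m}$ via (\ref{bs}), the periodic $\gamma_{j,m}$ with $m\ge 1$ by back-substitution in the upper-triangular system encoded in (\ref{ajlstar}), and finally $\gamma_{j,0}$ from the exceptional coefficient $c_{k_j}(y^{(2)})$. Your commentary on why $r^{\dagger}_{N-M_2}$ carries no information about $\gamma_{j,0}$ for $j>M_1$, and why $r_{N_1}(k_j)$ is still a legitimate evaluation, matches the paper's reasoning exactly.
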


\begin{proof}
1.\ The Fourier coefficients of the non-$P$-periodic part $y^{(1)}(t)$ can be determined  by the rational function $r_{N_1}(z)$  as in (\ref{rat1}) with $N_1:=\sum\limits_{j=1}^{M_1}(1+n_j)$ such that 
\begin{equation}\label{rat-non-per}
c_k(y^{(1)})=r_{N_1}(k) \ \ \ \text{ for} \  k \in \zz.
\end{equation}
In particular, by Lemma \ref{lemma5.3} there is a bijective map between the parameters $\{n_j, \,  \lambda_j, \, \gamma_{j,m}: \, j=1, \ldots, M_1, \, m=0, \ldots, n_j\}$ determining $y^{(1)}$ in (\ref{func-def-y1}) and  the parameters $\{ n_j, \, C_j, \, A_{j,\ell}: \, j=1, \ldots, M_1, \, \ell=0, \ldots , n_j\}$ determining $r_{N_1}(z)$. Therefore the non-$P$-periodic part $y^{(1)}(t)$ is uniquely determined by the rational function $r_{N_1}(z)$.

2.\ We consider now the representation for the Fourier coefficients of the $P$-periodic part $y^{(2)}(t)$. We denote  $C_j:=-{\mathrm i} \lambda_j P=k_j \in \zz$ for $j=M_1+1,\ldots ,M$, then we have $\Sigma=\{ k_{M_1+1},\ldots ,k_M\}$. According to (\ref{prop-per}),
\begin{equation} \label{prop-per1}
c_k(y^{(2)})=
\begin{cases}
\sum\limits_{\substack{ j=M_1+1 \\ j \neq j_0}}^{M} \sum\limits_{\ell=0}^{n_j-1} \frac{A^\ast_{j,\ell}}{(k-C_j)^{\ell+1}}+ \sum\limits_{\ell=0}^{n_{j_0}} \frac{\gamma_{j_0, \ell}}{\ell+1} P^\ell , & k=k_{j_0} \in \Sigma,\\
\sum\limits_{j =M_1+1}^{M} \sum\limits_{\ell=0}^{n_j-1} \frac{A^\ast_{j,\ell}}{(k-C_j)^{\ell+1}}, & k \in {\mathbb Z} \setminus \Sigma
\end{cases}
\end{equation}
with $A_{j,\ell}^{\ast} $ as in (\ref{ajlstar}).
Thus all Fourier coefficients $c_k(y^{(2)})$, $k \in \zz \setminus \Sigma$, still have a rational structure.  We consider the rational function 
\begin{equation} \label{rat-per}
 r^{\ast}_{N_2-M_2}(z):=\frac{p^{\ast}_{N_2-M_2-1}(z)}{q^{\ast}_{N_2-M_2}(z)} =  \sum_{j=M_{1}+1}^M \sum_{\ell=0}^{n_j-1} \frac{A^{\ast}_{j,\ell}}{(z-C_j)^{\ell+1}}.
\end{equation}
Note that the polynomials $p^{\ast}_{N_2-M_2-1}$ and $q^{\ast}_{N_2-M_2}$ do not have common zeros and $ r^{\ast}_{N_2-M_2}$ is of type $(N_2-M_2-1,N_2-M_2)$. Taking into account (\ref{prop-per1}) and (\ref{rat-per}) we conclude that
\begin{equation}\label{rat-per1}
c_k(y^{(2)})=r^{\ast}_{N_2-M_2}(k), \ \ \ k \in \zz \setminus \Sigma,
\end{equation}
and thus also  $c_{k}(y) = r_{N-M_{2}}^{\dagger}(k)$ for $k \in {\mathbb Z} \setminus \Sigma$.
 
3.\ We show now that the P-periodic function $y^{(2)}(t)$  is uniquely determined  by the rational function 
 $r^{\ast}_{N_2-M_2}$ and the Fourier coefficients $c_{k_{j}}(y^{(2)})$, $k_{j} \in \Sigma$,  or equivalently, by $C_{j}$, $A_{j,\ell}^{\ast}$, and $c_{k_{j}}(y^{(2)})$, $j=M_{1}, \ldots , M$, $\ell=0, \ldots , n_{j}-1$.
The  frequency parameters $\lambda_j$ of $y^{(2)}(t)$ in (\ref{func-def-y2})  are given  by $\lambda_{j} = {\mathrm i} \frac{C_{j}}{P}$ as in (\ref{beta1}), and $k_{j} = C_{j}$.
Observe that the definition of $A_{j,\ell}^{\ast}$, $\ell=0, \ldots, n_{j}-1$, in (\ref{ajlstar}) involves  $\gamma_{j,m}$ for $m=1, \ldots , n_{j}$, and all $\gamma_{j,m}$, $m=1, \ldots , n_{j}$, can be recovered from the $A_{j,\ell}^{\ast}$, $\ell=0, \ldots, n_{j}-1$, recursively. To determine also $\gamma_{j,0}$, $j=M_{1}+1, \ldots , M$,
we have to employ $c_{k_{j}}(y^{(2)})$ and obtain from the first line of (\ref{prop-per1})
$$
\gamma_{j,0} = c_{k_j}(y^{(2)})-\sum\limits_{\substack{ j_1=M_1+1 \\ j_1 \neq j}}^{M}  \sum\limits_{\ell=0}^{n_{j_1}-1} \frac{A^\ast_{j_1,\ell}}{(k_{j_1}-C_j)^{\ell+1}} - \sum_{\ell=1}^{n_{j}} \frac{P^{\ell}}{\ell+1} \gamma_{j,\ell}.
$$
More precisely, to recover all parameters $\gamma_{j,\ell}$ of $y^{(2)}(t)$ let 
\begin{align} \nonumber
\breve{c}_{k_j}(y) &:= c_{k_j}(y^{(2)})-\sum\limits_{\substack{j_1=M_1+1 \\ j_1 \neq j}}^{M}  \sum\limits_{\ell=0}^{n_{j_1}-1} \frac{A^\ast_{j_1,\ell}}{(k_{j_1}-C_j)^{\ell+1}} \\
\label{c-coef}
& = c_{k_j}(y)-c_{k_j}(y^{(1)}) -\sum\limits_{\substack{ j_1=M_1+1 \\ j_1 \neq j}}^{M}  \sum\limits_{\ell=0}^{n_{j_1}-1} \frac{A^\ast_{j_1,\ell}}{(k_{j_1}-C_j)^{\ell+1}}.
\end{align}
We define the
vectors  ${\mathbf A}_j^{\ast} := (\breve{c}_{k_j}(y),A^{\ast}_{j,0}, \ldots , A^{\ast}_{j,n_j-1})^T$ and  $\gamra_{j} :=(\gamma_{j,0}, \ldots , \gamma_{j,n_{j}})^{T}$. 
Then we obtain the linear relation 
\begin{equation}\label{sys-eq-per}
{\mathbf A}_j^{\ast} =
\left( \begin{array}{ccccc}
1 & \frac{P}{2} & \frac{P^{2}}{3} & \ldots & \frac{P^{n_j}}{n_j+1} \\
0 & -\binom{1}{0}P &  -\binom{2}{0}P^{2} &  \ldots & -\binom{n_j}{0}P^{n_j} \\
0 & 0 & -\frac{1}{2\pi {\mathrm i} } \binom{2}{1}P^{2}&  \ldots & -\frac{1}{2\pi {\mathrm i} } \binom{n_j}{1}P^{n_j}\\
\vdots & &  \ddots & & \\
0 & 0 &  \ldots &  0 &  -\frac{1}{(2\pi {\mathrm i})^{n_j} } n_j ! P^{n_j} 
\end{array} \right)  \gamra_j. 
\end{equation}  
Note also that the matrix in (\ref{sys-eq-per}) is invertible, therefore this system  has a unique solution $\gamra_j$.
For the case when some $n_j=1$, i.e., when the extended exponential sum (\ref{func-def-5}) has a proper component, we get that $\gamra_{j} =\gamma_{j,0}$ and (\ref{sys-eq-per}) simply gives (\ref{coef-rec-prop}).
\end{proof}

Finally, we study the recovery of extended exponential sums $y$ in (\ref{func-def-5}) that can be written as 
 $y=y^{(1)}+y^{(2)}$, where $y^{(1)}$ is the non-$P$-periodic part defined by (\ref{func-def-y1}) and $y^{(2)}$ is the $P$-periodic part defined by (\ref{func-def-y2}).
If $y^{(2)}$ is not just a proper exponential sum then this recovery problem essentially differs from the recovery of proper exponential sums in Section \ref{sec:periodic},
since $y^{(2)}$ also possesses infinitely many nonzero Fourier coefficients if we have some $n_j >0$ for $j \in \{M_1+1, \ldots , M\}$, see 
(\ref{prop-per1}). 
But by Lemma \ref{lemma5.4}, all but $M_2$ Fourier coefficients still have the structure of a rational function. 
Using this information, we can now reconstruct the function $y= y^{(1)} + y^{(2)}$ as follows.
\begin{theorem} \label{theoper1}
Let $y$ in $(\ref{func-def-5})$ be of the form $y = y^{(1)} + y^{(2)}$ as in $(\ref{func-def-y1})$ and $(\ref{func-def-y2})$, with $\gamma_{j,m} \in {\mathbb C}$, $\gamma_{j,n_{j}} \neq 0$, and where  $ \lambda_{j} \in {\mathbb C}$ are pairwise distinct. Further, let  $M_1 < M$, and let $M_2:=M-M_1$ be the number of $P$-periodic components in $y^{(2)}$.
Denote by $\Sigma:= \{-{\mathrm i} \lambda_{j} P: \, j=M_{1}+1, \ldots , M\} \subset {\mathbb Z}$ the index set corresponding to the frequencies of the $P$-periodic  part $y^{(2)}$.
Let $\{c_{k}(y): \, k \in \Gamma\}$, with $\Gamma \subset {\mathbb Z}$ be a set of $L \ge 2N+2$  Fourier coefficients of the Fourier expansion of $y$ on the finite interval $[0,P] \subset {\mathbb R}$ with $P>0$. Assume that  $\Sigma \subset \Gamma$. 
Then $y$ can be uniquely recovered from this set of Fourier coefficients. Algorithm $\ref{alg1}$ $($with ${\mathbf \Gamma} = (k)_{k \in \Gamma}$ and ${\mathbf f} := ({c}_{k}(y))_{k \in \Gamma}) $ terminates after at most $N+1$ steps  and provides a rational function $r^{\dagger}_{N-M_2}(z)$ of type $(N-M_2-1,N-M_2)$ that satisfies ${c}_{k}(y)= r^{\dagger}_{N-M_2}(k)$ for all $k \in {\mathbb Z} \setminus \Sigma$.
\end{theorem}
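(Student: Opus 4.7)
The plan is to combine the arguments from Theorems \ref{theoper} and \ref{theo4}. Lemma \ref{lemma5.4} already provides the crucial identity $c_k(y) = r^\dagger_{N-M_2}(k)$ for all $k \in \mathbb{Z}\setminus\Sigma$, where $r^\dagger_{N-M_2}$ is the rational function of type $(N-M_2-1,N-M_2)$ defined there. Thus the task reduces to showing that Algorithm \ref{alg1}, applied to the given Fourier coefficients, produces exactly $r^\dagger_{N-M_2}$ after at most $N+1$ iterations; once this is done, Lemma \ref{lemma5.4} already dictates how every parameter of $y$ is reconstructed from $r^\dagger_{N-M_2}$ together with the values $c_{k_j}(y)$, $k_j \in \Sigma$.

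I would examine the Loewner-type matrix $\mathbf{A}_{N+2}$ that arises at iteration step $N+1$, where $|S| = N+2$ interpolation indices have been collected. Writing $c_k(y) = r^\dagger_{N-M_2}(k) + e_k$ with $e_k = 0$ for $k \notin \Sigma$ and $e_k := c_k(y) - r^\dagger_{N-M_2}(k)$ otherwise, the matrix splits as $\mathbf{A}_{N+2} = \mathbf{L}_1 + \mathbf{L}_2$ with
$$
\mathbf{L}_1 := \Bigl(\frac{r^\dagger_{N-M_2}(n) - r^\dagger_{N-M_2}(k)}{n-k}\Bigr)_{n \in \Gamma\setminus S,\, k \in S}, \quad
\mathbf{L}_2 := \Bigl(\frac{e_n - e_k}{n-k}\Bigr)_{n \in \Gamma\setminus S,\, k \in S}.
$$
The partial fraction representation of $r^\dagger_{N-M_2}$ has $N-M_2$ basis terms, so the factorization trick from the proof of Theorem \ref{theo2} gives $\rank(\mathbf{L}_1) \le N-M_2$. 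For $\mathbf{L}_2$, set $s := |S \cap \Sigma|$: since $e_k = 0$ off $\Sigma$, the nonzero entries of $\mathbf{L}_2$ sit either in the $M_2-s$ rows indexed by $\Sigma \cap (\Gamma\setminus S)$ or in the $s$ columns indexed by $S \cap \Sigma$, and every row outside $\Sigma \cap (\Gamma\setminus S)$ lies in the $s$-dimensional span of the indicator vectors of those columns. Hence $\rank(\mathbf{L}_2) \le (M_2-s) + s = M_2$, giving $\rank(\mathbf{A}_{N+2}) \le N$. The kernel of $\mathbf{A}_{N+2}$ therefore has dimension at least $2$, so Algorithm \ref{alg1} finds two right singular vectors $\mathbf{v}_1,\mathbf{v}_2$ for the singular value $0$ that combine into a weight vector $\mathbf{w}$ satisfying both $\mathbf{A}_{N+2}\mathbf{w} = \mathbf{0}$ and $\mathbf{w}^T\mathbf{f}_{\mathbf{S}} = 0$.

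It remains to identify the resulting barycentric rational function $r$ with $r^\dagger_{N-M_2}$. The equation $\mathbf{A}_{N+2}\mathbf{w} = \mathbf{0}$ forces $c_n(y) = r(n)$ for every $n \in \Gamma\setminus S$ with $\tilde q_{N+1}(n)\neq 0$, while $c_n(y) \neq r^\dagger_{N-M_2}(n)$ for $n \in \Sigma$; hence Algorithm \ref{alg1}'s termination criterion requires all indices of $\Sigma$ to have been absorbed into $S$ before the procedure stops, which is possible precisely because $\Sigma \subset \Gamma$. Once $\Sigma \subset S$, the clean interpolation conditions on $S\setminus\Sigma$ (of cardinality at least $N+2-M_2$) pin down $r = r^\dagger_{N-M_2}$ uniquely by uniqueness of rational interpolation of type $(N-M_2-1,N-M_2)$, exactly as in Theorem \ref{theo4}. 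Finally, Lemma \ref{lemma5.4} yields every parameter of $y$: the finite poles of $r^\dagger_{N-M_2}$ and their multiplicities produce $M_1$, $M_2$ and the $n_j$ via the dichotomy $-\mathrm{i} C_j \in \tfrac{1}{P}\mathbb{Z}$ or not, formula $(\ref{beta1})$ gives all $\lambda_j$, the residues together with $(\ref{bs})$ and $(\ref{bs1})$ provide all $\gamma_{j,m}$ with $m\ge 1$, and the explicit formula at the end of Lemma \ref{lemma5.4} recovers $\gamma_{j,0}$ for $j>M_1$ from $c_{k_j}(y)$. The main obstacle is the rank bookkeeping for $\mathbf{L}_2$, together with verifying that after termination sufficiently many components of $\mathbf{w}$ indexed by $S\setminus\Sigma$ are nonzero (at least $N-M_2+1$, by linear independence of any $N-M_2$ columns of the Cauchy-like factor for $r^\dagger_{N-M_2}$), so that the barycentric form truly collapses to a rational function of the claimed type $(N-M_2-1,N-M_2)$ rather than a higher type.
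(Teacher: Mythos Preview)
Your proof is correct and follows the same overall strategy the paper indicates (``along the lines of Theorem~\ref{theoper}''): show that the Loewner matrix $\mathbf{A}_{N+2}$ at step $N+1$ has rank at most $N$, extract a weight vector satisfying both side conditions, and identify the resulting barycentric rational function with $r^{\dagger}_{N-M_2}$ via Lemma~\ref{lemma5.4}. Your additive splitting $\mathbf{A}_{N+2}=\mathbf{L}_1+\mathbf{L}_2$ is a clean variant of the submatrix argument used in the proof of Theorem~\ref{theoper}; note only that the ``factorization trick from Theorem~\ref{theo2}'' must be extended to handle the higher-order poles of $r^{\dagger}_{N-M_2}$ (each term $A/(z-C)^{\ell+1}$ contributes a rank-$(\ell{+}1)$ summand to $\mathbf{L}_1$), and that ``indicator vectors'' should read ``coordinate subspace on those $s$ columns''.
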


Theorem \ref{theoper1} can be proved along the lines of Theorem \ref{theoper}.
To recover $y$, we can now proceed as follows.

\noindent
\textbf{Step 1.} We apply Algorithm \ref{alg1} to ${\mathbf \Gamma} = (k)_{k \in \Gamma}$ and ${\mathbf f} := ({c}_{k}(y))_{k \in \Gamma}$ and find after at most  $N+1$ iteration steps the (sub)vectors ${\mathbf S}= (z_j)_{j=1}^{N+1}= (k_{\ell})_{\ell=1}^{N+1}$, ${\mathbf c}_{\mathbf S} = (c_{k_{\ell}}(y))_{\ell=0}^{N+1} \in {\mathbb C}^{N+1}$ and ${\mathbf w}= (w_{j})_{j=1}^{N+1} \in {\mathbb C}^{N+1}$. If ${\mathbf S}$ contains integer indices $k_j \in \zz$ of the form $k_j=-{\mathrm i}\lambda_j P$, $j \in \{M_1+1, \ldots , M\}$, then the corresponding components of ${\mathbf w}$ vanish, since these interpolation points do not possess a rational function structure. 
Therefore, we simply remove the zero components of ${\mathbf w}$ and the corresponding components of ${\mathbf S}$ and ${\mathbf c}_{\mathbf S}$ to obtain the parameter vectors that determine the rational function $r^{\dagger}_{N-M_2}(z)$ of type $(N-M_2-1,N-M_2)$ in barycentric form (\ref{bar-form-1}).

\noindent
\textbf{Step 2.} 
We apply now a  modification of Algorithms \ref{alg4} as follows.  From the definition of $r^{\dagger}_{N-M_2}(z)$ in Lemma \ref{lemma5.4} it follows that we find $N-M_2$ poles (counting also multiplicities):  each $C_j \not\in {\mathbb Z}$  
is a pole with multiplicity $n_j+1$ and belongs to rational function  $r_{N_1}(z)$ that corresponds to the non-$P$-periodic part $y^{(1)}$. Each $C_j \in {\mathbb Z}$ is a pole with multiplicity $n_j$ and belongs to the rational function $r^{\ast}_{N_2-M_2}(z)$ which corresponds to
the $P$-periodic part $y^{(2)}$.  Taking into account this information we find
 $M$ pairwise distinct $C_j$ that are the poles of the $r^{\dagger}_{N-M_2}(z)$ with multiplicities $n_j+1$ for non-$P$-periodic components and $n_j$ for $P$-periodic components. The set $\Sigma$ can be easily determined by  $\Sigma=\{C_j: \, j=1,\ldots,M\} \cap \zz$.  Then we compute the parameters $A_{j,\ell}$, $j=1,\ldots,M_1$, $\ell=0,\ldots,n_j$, and $A^\ast_{j,\ell}$, $j=M_1+1,\ldots,M$, $\ell=0,\ldots,n_j-1$, using the $N+1-M_2$ interpolation conditions 
$$
r^{\dagger}_{N-M_2}(k_s)=c_{k_s}(y), \ \ s=1,\ldots, N+1-M_2, \ \ k_s \not \in \Sigma.
$$
The values $A_{j,\ell}$, $j=1,\ldots,M_1$, $\ell=0,\ldots,n_j$, and $A^\ast_{j,\ell}$, $j=M_1+1,\ldots,M$, $\ell=0,\ldots,n_j-1$, are solutions of this system. In general we have $N-M_2$ values $A_{j,\ell}, \, A^\ast_{j,\ell}$.

\noindent
\textbf{Step 3.} We recover the frequencies $\lambda_j$ via (\ref{beta1})  for all $j=1,\ldots ,M$. For the non-$P$-periodic function $y^{(1)}$,  we find coefficients $\gamma_{j,m}$, $j=1,\ldots ,M_1$, $m=0,\ldots ,n_j$, by using formula (\ref{gamma2}) with values $A_{j,\ell}$, $j=1,\ldots ,M_1$, $\ell=0,\ldots ,n_j$.

\noindent
\textbf{Step 4.} 
We  compute the coefficients $\gamma_{j,m}$, $j=M_1+1,\ldots ,M$, $m=0,\ldots ,n_j$, by solving the linear system (\ref{sys-eq-per}) by using values $A^\ast_{j,\ell}$, $j=M_1+1,\ldots ,M$, $\ell=0,\ldots ,n_j-1$, and the vector  $\breve{\mathbf{c}}_{\Sigma}= (\breve{c}_{k}(y))_{k \in \Sigma}$, with $\breve{c}_{k}(y)$ in (\ref{c-coef}),  which can be determined using $c_{k}(y^{(1)}) = \sum\limits_{j=1}^{M_1}  \sum\limits_{\ell=0}^{n_j}   \frac{A_{j,\ell}}{(k-C_j)^{\ell+1}}$ for $k \in \Sigma$. 

When the exponential sum contains a proper $P$-periodic part, the corresponding component $-\mathrm{i} \lambda_j P$ does not appear as a pole because in this case $n_j=0$. Therefore, we apply a technique  similar to the one that we used in Section \ref{sec:periodic}  in order to detect the corresponding frequency  $\lambda_j$, namely we compere the Fourier coefficients and the values of the rational function constructed by Algorithm \ref{alg1}. The coefficient $\gamma_j$ can be found then via  (\ref{coef-rec-prop}).

\begin{example}\label{mulrper}
We consider the extended exponential sum 
\begin{align}
y_4(t):=&\left( 3.46-0.5 \mathrm{i}+ (-1.6+7.3 \mathrm{i})t-2.4 t^2 \right) {\mathrm e}^{2\pi\cdot ( -0.1-0.73\mathrm{i})  t} \nonumber \\
&+ \left( -3.8-1.999\mathrm{i}+(-0.2-0.4\mathrm{i})t \right) {\mathrm e}^{2\pi \cdot(0.05- \sqrt{10.11} \mathrm{i}) t} \nonumber \\
& + \left( -7.33+7.033 \mathrm{i} + 3.89 t + (2.48-0.45 \mathrm{i})t^2 + (-5.3+0.01 \mathrm{i})t^3 \right) {\mathrm e}^{2\pi \cdot 1.5 \mathrm{i}  t}, \label{y4}
\end{align}
see Figure \ref{fig4},
i.e., $y_4$ is of the form (\ref{func-def-5}) with $N=9$ and with parameters
\begin{align*}
{\mathbf n} &= (n_1,n_2, n_3) =(2,1,3), \qquad \lamdra=(\lambda_1, \lambda_2, \lambda_3) = (-0.1-0.73\mathrm{i}, 0.05- \sqrt{10.11} \mathrm{i}, 1.5 {\mathrm i}), \\
\gamra_1  &= (\gamma_{1,0}, \gamma_{1,1}, \gamma_{1,2}) = (3.46-0.5 \mathrm{i}, -1.6+7.3 \mathrm{i}, -2.4 ), \\
\gamra_2 &=(\gamma_{2,0}, \gamma_{2,1}) = ( -3.8-1.999\mathrm{i}, -0.2-0.4\mathrm{i}), \\
 \gamra_3 &=(\gamma_{3,0}, \gamma_{3,1}, \gamma_{3,2}, \gamma_{3,3}) = (-7.33+7.033 \mathrm{i}, 3.89,  2.48-0.45 \mathrm{i}, -5.3+0.01 \mathrm{i}).
\end{align*}
\begin{figure}[h]
\centering
\begin{subfigure}{0.46\textwidth}
\includegraphics[width=\textwidth]{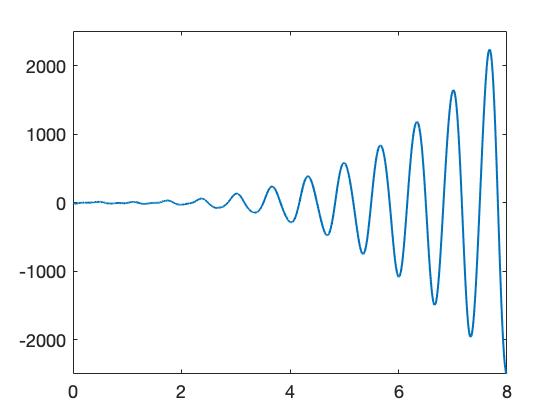}   
\end{subfigure}
\begin{subfigure}{0.46\textwidth}
\includegraphics[width=\textwidth]{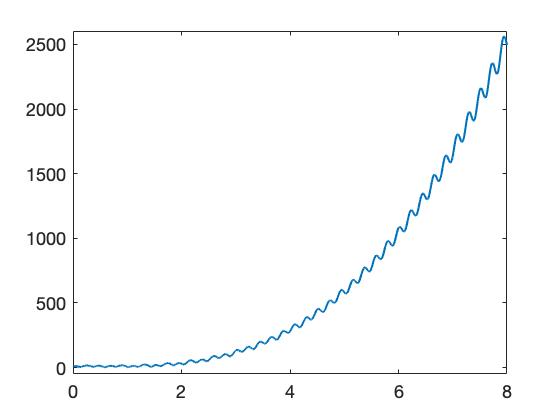}   
\end{subfigure}
\caption{Graph of  $\mathrm{Re} \, y_4$ (left) and of $|y_4|$ (right) for $y_4(t)$ in (\ref{y4}) on 
$[0,8]$. }
\label{fig4}
\end{figure}
To reconstruct $y_{4}$, we  take $P=8$ and employ 95 Fourier coefficients $c_k(y_4)$, $k=-47,\ldots,47$.  For $P=8$, the function $y_{4}$ contains two extended non-$P$-periodic terms,
\begin{align*}
y^{(1)}_4(t)=&\left( 3.46-0.5 \mathrm{i}+ (-1.6+7.3 \mathrm{i})t-2.4 t^2 \right) {\mathrm e}^{2\pi\cdot (-0.1-0.73\mathrm{i})t} \\
& + \left( -3.8-1.999\mathrm{i}+(-0.2-0.4\mathrm{i})t \right) {\mathrm e}^{2\pi  \cdot(0.05- \sqrt{10.11} \mathrm{i})t},
\end{align*}
and one extended $P$-periodic term with $n_3=3$,
$$
y^{(2)}_4(t) = \left( -7.33+7.033 \mathrm{i} + 3.89 t + (2.48-0.45\mathrm{i})t^2 + (-5.3+0.01 \mathrm{i})t^3 \right) {\mathrm e}^{2\pi \cdot 1.5 \mathrm{i} t}.
$$
Algorithm \ref{alg1} iteratively uses the values  $c_{12}(y_4)$, $c_{11}(y_4)$, $c_{13}(y_4)$,  $c_{-25}(y_4)$, $c_{-26}(y_4)$, $c_{-6}(y_4)$, $c_{-5}(y_4)$, $c_{-7}(y_4)$, $c_{15}(y_4)$ and $c_{27}(y_4)$ for interpolation (in this order) before it stops with the error $3.35\cdot 10^{-13}$ after $9$ iteration steps. 
The first component of ${\mathbf  w} \in \cc^{10}$ vanishes showing that $c_{12}(y_4)$  is not interpolated by the obtained rational function. Indeed for the frequency $\lambda_3= 1.5{\mathrm i} $ we have $-{\mathrm i} \lambda_3 P=12$. Therefore the coefficient $c_{12}(y_4)$ contains information about the $P$-periodic part $y^{(2)}_4$ of $y_4$. After omitting this first term in  ${\mathbf  w} $ and in the corresponding index in the vector ${\mathbf  S}$, we obtain the rational function $r_9(z)$ of type $(8,9)$ in the barycentric form (\ref{bar-form-1}) with
$$
 {\mathbf  S}=\left(
\begin{matrix}
    11\\
    13\\
   -25\\
   -26\\
    -6\\
    -5\\
    -7\\
    15\\
    27
\end{matrix}
\right), \, \, \, \, \, \, \, \, \, 
 {\mathbf  w}=\left(
\begin{matrix}
 -0.001735068914586 - 0.005755006114429{\mathrm i} \\
 -0.003788627020742 - 0.013231843260615{\mathrm i} \\
 -0.005033054805759 + 0.000686121267669{\mathrm i} \\
 -0.006507095079179 - 0.001885228349247{\mathrm i} \\
  0.008318151814712 - 0.007345769247111{\mathrm i} \\
  0.010826619319887 - 0.012212232030143{\mathrm i} \\
  0.027770232215522 + 0.003108429826714{\mathrm i} \\
  0.057427797470378 + 0.209382506645587{\mathrm i} \\
 -0.222536073428981 - 0.949668937525004{\mathrm i} 
\end{matrix}
\right).
$$
We compute the vector ${\mathbf C}$ of poles, the number $M$ of pairwise distinct poles, the vector ${\mathbf n}$ of multiplicities of poles and the parameters $A_{j,\ell}$ for $\ell=0,\ldots ,n_j$, $j=1,\ldots ,M_1$,  $A^{\ast}_{j,\ell}$ for  $\ell=0,\ldots ,n_j-1$,  $j=M_1+1, \ldots , M$, as it is explained in Steps 2 and 3 above.
We obtain the poles of $r_9(z)$
$$
 {\mathbf  C}=\left(
\begin{matrix}
-25.436980930912810 - 0.400000242723915{\mathrm i} \\
-25.436980974957748 - 0.399999757275449{\mathrm i} \\
 12.000005193419646 - 0.000047024733537{\mathrm i} \\
 11.999956677814936 + 0.000019014605692{\mathrm i} \\
 12.000038128765494 + 0.000028010127754{\mathrm i} \\
 -5.840164248641018 + 0.799937410836133{\mathrm i} \\
 -5.839863721435004 + 0.799889033201780{\mathrm i} \\
 -5.839972029901213 + 0.800173555965108{\mathrm i} 
\end{matrix}
\right).
$$

We assume that two poles $C_{j_1}$ and $C_{j_2}$, $j_1\neq j_2$ are equal if $|C_{j_1}-C_{j_2}| < 0.001$ holds, and find  $M=3$ different poles: $C_1=  -5.839999999992412 + 0.800000000001007 {\mathrm i}$ with multiplicity $n_1=3$, $C_2=-25.436980952935279 - 0.399999999999682 {\mathrm i}$ with multiplicity $n_2=2$, and $C_3=  12.000000000000027 - 0.000000000000030i {\mathrm i}$ with multiplicity $n_3=4$. The poles $C_1$ and $C_2$ corresponds to the non-8-periodic part of the exponential sum $y_4$, therefore they appear in $ {\mathbf  C}$  with multiplicities $n_1+1$ and $n_2+1$ respectively. The pole $C_3$ corresponds to the 8-periodic part of $y_4$ therefore it comes with multiplicity $n_3$.
To determine the values for the poles $C_1$, $C_2$ and $C_3$ we have taken the average values. 
Finally, we reconstruct $\lambda_j$, $j=1, 2, 3$, and $\gamma_{j,\ell}$, $j=1,2$, $\ell=0, \ldots , n_{j}$, as described in Step 3 and $\gamma_{3,\ell}$, $\ell=0, \ldots , n_{3}$, via Step 4 above.
The reconstructed parameter vectors $\tilde{\mathbf n}$ and $\tilde{\lamdra}$ read, $\tilde{\mathbf n}=(2,1,3)$
and
$$
 \tilde{\lamdra}^{T}=\left(
\begin{matrix}
  -0.100000000000126 - 0.729999999999052{\mathrm i} \\
  0.049999999999960 - 3.179622619116910{\mathrm i} \\
0.000000000000004 + 1.500000000000003{\mathrm i} 
\end{matrix}
\right).
$$
The recovery errors are
$$
\|\tilde{\lamdra} -\lamdra\|_\infty=9.56 \cdot  10^{-13}, \ \ \ \ \|\tilde{\gamra} - \gamra\|_\infty=3.11 \cdot 10^{-10}.
$$
\end{example}

\section{Conclusions}

In Sections 4 and 5 we have considered the recovery of proper and extended exponential sums. We have shown that sums of the form 
$$
y(t)=\sum_{j=1}^M \left( \sum_{m=0}^{n_j}  \, \gamma_{j,m} \, t^{m} \right) {\mathrm e}^{\lambda_j t}, \quad    \gamma_{j, n_j}\neq 0, 
$$
with $\sum\limits_{j=1}^{M}(1+n_j ) = N$, $\gamma_{j,m}\in \cc$ and pairwise distinct $\lambda_j \in {\mathbb C}$ can always be recovered from at most $2N+2$ Fourier coefficients of a Fourier expansion on a finite interval. Numerical stability of this procedure essentially depends on the  numerical stability of the underlying AAA algorithm for rational approximation of these Fourier coefficients. Observe that the considered model also covers sums of the form
$$
y_0(t)=\sum\limits_{m=0}^{n_0} \delta_m \,t^{m} + \sum_{j=1}^M \left( \sum_{m=0}^{n_j}  \, \gamma_{j,m} \, t^{m} \right) {\mathrm e}^{\lambda_j t} , 
$$
with $\delta_{m} \in {\mathbb C}$, where the polynomial term occurs for $\lambda = 0$. Obviously, $\lambda_0=0 \in \frac{{\mathrm i}}{P} {\mathbb Z}$ for any $P>0$, and we need to apply the procedure described in Section \ref{sec:52} for its recovery.
Our models also cover real signals of the form 
\begin{equation}\label{real} y(t) = \sum_{j=1}^{M} \gamma_{j} \, \cos(2 \pi \alpha_{j}t + b_{j}) \end{equation}
with $\gamma_{j} \in {\mathbb R} \setminus \{0\}$, $\alpha_{j} \in {\mathbb R}$ and $b_{j} \in [0, 2\pi)$ considered in \cite{PP2020}.
The algorithms in \cite{PP2020} for recovery of $y(t)$ in (\ref{real}) are also based on rational approximation of the Fourier coefficients, but have essentially used  the additional information that all parameters are real, and are therefore different from the algorithms for the complex case considered here. Moreover, because of a different representation of the rational function in form of partial fraction decomposition the algorithm in \cite{PP2020} requires to use  modified  coefficients $\tilde{c}_k(f)=\mathrm{Re } \, c_k(f) +\frac{{\mathrm i }}{k} \, \mathrm{Im } \, c_k(f)$ instead of $c_k(f)$. Our approach can now 
 be also applied to the recovery of 
$$ y(t) = \sum_{j=1}^{M} \gamma_{j}(t) \, \cos(2 \pi \alpha_{j}t + b_{j}), $$
where $\gamma_{j}(t)$ are polynomial of finite degree. 

\section*{Acknowledgement}
The authors gratefully acknowledge support by the German Research Foundation in the framework of the RTG 2088.

\small
\bibliography{myBib}{}
\bibliographystyle{plain}

\end{document}